\def\kk{{\Bbbk}}
\def\B'c{{\mathcal{B'}}}
\def\U'c{{\mathcal{U'}}}
\def\opn#1#2{\def#1{\operatorname{#2}}} 
\opn\chara{char}
\opn\length{\ell}
\opn\projdim{proj\,dim}
\opn\injdim{inj\,dim}
\opn\ini{in}
\opn\rank{rank}
\opn\Tiefe{Tiefe}
\opn\grade{grade}
\opn\height{height}
\opn\embdim{emb\,dim}
\opn\codim{codim}
\opn\Tr{Tr}
\opn\bigrank{big\,rank}
\opn\superheight{superheight}\opn\lcm{lcm}
\opn\trdeg{tr\,deg}%
\opn\reg{reg}
\opn\lreg{lreg}
\opn\deg{deg}
\opn\lcm{lcm}
\opn\div{div}
\opn\Div{Div}
\opn\cl{cl}
\opn\Cl{Cl}
\opn\Spec{Spec}
\opn\Supp{Supp}
\opn\supp{supp}
\opn\Sing{Sing}
\opn\Ass{Ass}
\opn\Min{Min}
\opn\Ann{Ann}
\opn\Rad{Rad}
\opn\Soc{Soc}
\opn\Ker{Ker}
\opn\Coker{Coker}
\opn\Im{Im}
\opn\Hom{Hom}
\opn\Tor{Tor}
\opn\Ext{Ext}
\opn\End{End}
\opn\Aut{Aut}
\opn\id{id}
\opn\nat{nat}
\opn\GL{GL}
\opn\SL{SL}
\opn\mod{mod}
\opn\ord{ord}
\opn\depth{depth}
\opn\set{set}
\opn\Shad{Shad}
\opn\aff{aff}
\opn\con{conv}
\opn\relint{relint}
\opn\st{st}
\opn\lk{lk}
\opn\cn{cn}
\opn\core{core}
\opn\vol{vol}
\opn\gr{gr}
\def\pot#1#2{#1[\kern-0.28ex[#2]\kern-0.28ex]}
\opn\dirlim{\underrightarrow{\lim}}
\opn\invlim{\underleftarrow{\lim}}
\def\pnt{{\raise0.5mm\hbox{\large\bf.}}}
\def\twoline#1#2{\aoverb{\scriptstyle {#1}}{\scriptstyle {#2}}}
\newcommand{\aoverb}[2]{{\genfrac{}{}{0pt}{1}{#1}{#2}}}
\def\Implies{\ifmmode\Longrightarrow \else
     \unskip${}\Longrightarrow{}$\ignorespaces\fi}
\def\implies{\ifmmode\Rightarrow \else
     \unskip${}\Rightarrow{}$\ignorespaces\fi}
\def\iff{\ifmmode\Longleftrightarrow \else
     \unskip${}\Longleftrightarrow{}$\ignorespaces\fi}
\newtheorem{Theorem}{Theorem}[section]
\newtheorem{Lemma}[Theorem]{Lemma}
\newtheorem{Corollary}[Theorem]{Corollary}
\newtheorem{Proposition}[Theorem]{Proposition}
\let\epsilon=\varepsilon
\let\phi=\varphi
\let\kappa=\varkappa
\title{Normally torsion-free lexsegment ideals}
\author{Anda Olteanu}
\address{Faculty of Mathematics and Computer Science, Ovidius University, Bd.\ Mamaia 124,
 900527 Constanta, Romania,} \email{olteanuandageorgiana@gmail.com}
 \thanks{The author was supported by the BitDefender Postdoctoral Fellowship at the ``Simion Stoilow" Institute of Mathematics of the Romanian Academy (IMAR) and by the CNCSIS-UEFISCSU project PN II-RU PD 23/2010}
\begin{document}

\maketitle

\begin{abstract}  In this paper we characterize all the lexsegment ideals which are normally torsion-free. Our characterization is given in terms of the ends of the lexsegment. We also prove that the property of being normally torsion-free is equivalent to the property of the depth function of being constant.

Keywords: normally torsion-free ideal, monomial ideals, lexsegment ideals, a\-ssociated primes.\\ 

MSC: 13C15, 13A02, 13C99.
\end{abstract}

\section*{Introduction} Powers of arbitrary ideals in general, and of monomial ideals in particular have been intensively studied during last years. Conca's examples \cite{C} of ideals with linear quotients generated in one degree whose powers do not have a linear resolution show that the powers of ideals do not generally preserve the homological properties and the invariants of the ideal. It is interesting to see how much these invariants may vary compared to those of the given ideal. 

Let $S=\kk[x_1,\ldots,x_n]$ be the polynomial ring in $n$ variables over a field $\kk$ and $I\subset S$ be a graded ideal. A well-known result of Brodmann \cite{B} says that the sets of associated primes of powers of $I$, $\Ass(S/I^t)$, stabilize for large $t$. An interesting case is that when $t=1$. In this case the ideal is called normally torsion-free. More precisely, an ideal $I$ is \textit{normally torsion-free} if $\Ass(S/I)=\Ass(S/I^k)$, for all $k\geq2$. Examples of normally torsion-free ideals appear from graph theory. It is known that a graph $G$ is bipartite if and only if its edge ideal is normally torsion-free, \cite{SVV}. Even if squarefree monomial ideals which are normally torsion-free have been intensively studied \cite{S}, \cite{SVV}, \cite{TM}, normally torsion-free monomial ideals which are not squarefree are almost unknown. We aim at characterizing all lexsegment ideals which are normally torsion-free. This will provide a large class of normally torsion-free monomial ideals which are not squarefree. We recall that, if $d\geq2$ is an integer and $u$ and $v$ are two monomials of degree $d$ in $S$ such that $u\geq_{lex}v$, then the monomial ideal generated by all the monomials $m$ of degree $d$ such that $u\geq_{lex}m\geq_{lex}v$ is called a lexsegment ideals. Lexsegment ideals were defined by Hullet and Martin \cite{HM} and they were also studied by Aramova, De Negri and Herzog \cite{ADH}, \cite{DH}. 

The paper is organized in two sections. Since there is a strong connection between the property of an ideal $I\subset S$ of having the maximal graded ideal as an associated prime ideal and the depth, more precisely, for an ideal $I$ of $S$, $\depth(S/I)=0$ if and only if $\frak{m}=(x_1,\ldots,x_n)\in\Ass(S/I)$, the first section is devoted to the study of the depth of the powers of lexsegment ideals. We prove that, if $d\geq 2$ is an integer, $u\geq_{lex}v$ are two monomials of degree $d$ which do not satisfy any of the following conditions: (i) $x_2\nmid u$, $x_2^d\geq_{lex} v>_{lex}x_{2}^{d-1}x_{\min(u/x_1)}$ and $w>x_2u/x_1$, where $w$ is the greatest monomial of degree $d$ such that $w<_{lex}v$, (ii) $d=2$, $u\leq_{lex}x_1x_3$ and $v=x_2^2$, and $I$ is the corresponding lexsegment ideal, then there exists some $k\geq1$ such that $\depth(S/I^k)=0$. Moreover, if $\depth(S/I^k)=0$ for some $k\geq1$, then $\depth(S/I^{k+j})=0$, for all $j\geq1$.

In the second section, we determine classes of lexsegment ideals which are normally torsion free. Using the results from the first section, one may easy see that these are the only classes of lexsegment ideals. For lexsegment ideals, properties such as having a linear resolution or being Cohen--Macaulay can be determined just by looking at the ends of the lexsegment \cite{ADH}, \cite{DH}. Our characterization for normally torsion-free lexsegment ideals is also given in terms of the ends of the lexsegment. Moreover, one may easy see that the property of being normally torsion-free is equivalent with the property of the depth of the powers of being constant.

\section{Depth of powers of lexsegment ideals}

In this section, we will focus on the depth of powers of lexsegment ideals. Through this paper, we denote by $S=\kk[x_1,\ldots,x_n]$ the polynomial ring in $n$ variables over a field $\kk$. We aim at finding all the lexsegment ideals $I$ which have the property that there exists some $k\geq1$ such that $\depth(S/I^k)=0$. In order to do this, we will prove that there exists some $k\geq1$ such that $\frak{m}=(x_1,\ldots,x_n)\in\Ass(S/I^k)$. Firstly, we recall the most frequently used concepts.

Let $I\subseteq S$ be a monomial ideal. A prime ideal $\frak{p}$ is an associated prime ideal of $I$ if there exists a monomial $m$ in $S$ such that $\frak{p}= I: (m)$. If we denote by $\Min(S/I)$ the set of minimal prime ideals over $I$, then it is known that $\Min(S/I)\subseteq\Ass(S/I^k)$ for every $k\geq 1$.

Let $m=x_1^{a_1}\cdots x_n^{a_n}$ be a monomial. For any $i$, $1\leq i\leq n$, we denote $\nu_i(m)=a_i$. The set $\supp(m)=\{i\ :\ \nu_i(m)\geq1\}$ is called \textit{the support} of the monomial $m$. One may define $\min(m):=\min(\supp(m))$ and $\max(m):=\max(\supp(m))$. Also, for a monomial ideal $I\subset S$, we will denote by $G(I)$ the minimal monomial generating set of $I$.

Let $<_{lex}$ be the lexicographical order on $S$ with respect to $x_1>_{lex}\cdots>_{lex}x_n$. We recall that, for two monomials $m$ and $m'$, $m<_{lex}m'$ if $\deg(m)<\deg(m')$ or $\deg(m)=\deg(m')$ and there exists $1\leq s\leq n$ such that, for any $i<s$, $\nu_i(m)=\nu_i(m')$ and $\nu_s(m)<\nu_s(m')$.

Let $d\geq2$ be an integer. We denote by $\mathcal{M}_d$ the set of all the monomials in $S$ of degree $d$. For two monomials $u$ and $v$ in $\mathcal{M}_d$ such that $u\geq_{lex}v$, one considers the set $\mathcal{L}(u,v)=\{m: m\in\mathcal{M}_d,\ u\geq_{lex}m\geq_{lex}v\}$ which is called a \textit{lexsegment set}. If $u=x_1^d$, then $\mathcal{L}^i(v):=\mathcal{L}(x_1^d,v)$ is called an \textit{initial lexsegment}, and if $v=x_n^d$ then $\mathcal{L}^f(u):=\mathcal{L}(u,x_n^d)$ is called a \textit{final lexsegment}. A \textit{(initial, final) lexsegment ideal} is a monomial ideal generated by a (initial, final) lexsegment set. 

Let $u,v\in \mathcal{M}_d$ be two monomials, $u\geq_{lex}v$, and $I=(\mathcal{L}(u,v))$ be the corresponding lexsegment ideal.
We note that we may always assume that $x_1\mid u$ and $x_1\nmid v$. Indeed, if $x_1\mid v$ we denote $u=x_1^{a_1}\cdots x_n^{a_n}$ and $v=x_1^{b_1}\cdots x_n^{b_n}$, with $a_1\geq b_1>0$. If $a_1=b_1$, then $I=(\mathcal{L}(u,v))$ is isomorphic, as an $S$-module, to the ideal generated by the lexsegment $\mathcal{L}(u/x_1^{a_1},v/x_1^{b_1})$ of degree $d-a_1$. This lexsegment may be studied in the polynomial ring in a smaller number of variables. If $a_1>b_1$, then $I=(\mathcal{L}(u,v))$ and $(\mathcal{L}(u/x_1^{b_1},v/x_1^{b_1}))$ are isomorphic as $S$-modules and we have $\nu_1(u/x_1^{b_1})>1$ and $\nu_1(v/x_1^{b_1})=0$.  Therefore, through this section, we will always assume that $x_1\mid u$ and $x_1\nmid v$. We denote by $w$ the largest monomial of degree $d$ such that $w<_{lex}v$.

The following result describes the depth for the powers of lexsegment ideals $I$ such that $\depth(S/I)=0$.
\begin{Proposition}\label{depthlex0} Let $u$ and $v$ be two monomials of degree $d$, $u\geq_{lex}v$, and $I=(\mathcal{L}(u,v))$. If $\depth(S/I)=0$, then $\depth(S/I^k)=0$ for any $k\geq1$. 
\end{Proposition}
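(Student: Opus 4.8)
The plan is to translate the statement into the language of associated primes and then produce an explicit witness that works for every power simultaneously. By the dictionary recalled in the introduction, $\depth(S/J)=0$ if and only if $\frak{m}=(x_1,\dots,x_n)\in\Ass(S/J)$; applying this with $J=I$ and with $J=I^k$, it suffices to prove that $\frak{m}\in\Ass(S/I)$ forces $\frak{m}\in\Ass(S/I^k)$ for every $k\geq1$. Since $I$ is a monomial ideal, the hypothesis $\depth(S/I)=0$ supplies a monomial $m$ with $I:(m)=\frak{m}$; concretely, $m\notin I$ while $x_im\in I$ for all $i$.

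The candidate witness for $I^k$ is $m_k=m\,u^{k-1}$, where $u$ is the lex-largest generator, i.e.\ the top of the lexsegment (so $u\in G(I)$). First I would check the easy inclusion $\frak{m}\subseteq I^k:(m_k)$: for each $i$ we have
$$x_im_k=(x_im)\,u^{k-1}\in I\cdot I^{k-1}=I^k,$$
since $x_im\in I$ and $u^{k-1}\in I^{k-1}$. Thus $I^k:(m_k)$ is an ideal containing the maximal ideal $\frak{m}$, hence it is either $\frak{m}$ itself or all of $S$; and it equals $S$ precisely when $m_k\in I^k$. Consequently the entire proposition reduces to the single non-membership statement $m_k=m\,u^{k-1}\notin I^k$, which then yields $I^k:(m_k)=\frak{m}$ by maximality of $\frak{m}$, and therefore $\frak{m}\in\Ass(S/I^k)$. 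Because this construction produces a witness for each $k$ at once, establishing the non-membership for all $k$ gives the full conclusion directly.

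The main obstacle is exactly this non-membership, and I expect the argument to split by the degree of $m$. A first reduction is a degree count: $\deg(m\,u^{k-1})=\deg m+(k-1)d$, while $I^k$ is generated in degree $kd$; so if $\deg m<d$ then $m_k\in I^k$ is impossible outright, and the substance lies in the case $\deg m\geq d$. There I would argue by contradiction: if $m\,u^{k-1}\in I^k$, it is divisible by a product $g_1\cdots g_k$ of $k$ generators $g_j\in G(I)=\mathcal{L}(u,v)$, each of degree $d$. The aim is to exploit the lexsegment structure to peel off the factor $u^{k-1}$ and conclude that the leftover part of $m$ is already divisible by some $g_j$, contradicting $m\notin I$. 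The delicate point is that the $g_j$ need not equal $u$, so one cannot cancel $u^{k-1}$ naively; the argument must use that $u$ is lex-maximal — its exponent is concentrated on the small-index variables, and we have arranged $x_1\mid u$, $x_1\nmid v$ — together with the interval constraint $u\geq_{lex}g_j\geq_{lex}v$ to control how the exponents of $u^{k-1}$ can be distributed among the factors. Carrying out this exponent bookkeeping, comparing $\nu_i(m\,u^{k-1})$ with $\sum_j\nu_i(g_j)$ variable by variable and tracking where the lex inequalities are strict, is where the real work lies and is the step most sensitive to the precise shape of the ends $u$ and $v$.
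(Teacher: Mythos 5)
Your reduction to exhibiting a monomial witness for $\frak{m}\in\Ass(S/I^k)$ is correct, and so is the easy inclusion $\frak{m}\subseteq I^k:(mu^{k-1})$. But the proof has a genuine gap at precisely the point you defer: the non-membership $mu^{k-1}\notin I^k$ is established only when $\deg m<d$, you never show that a witness of degree $<d$ exists, and for the case $\deg m\geq d$ you offer a plan ("peel off $u^{k-1}$", "exponent bookkeeping") rather than an argument. This deferred step is not a routine verification, because the claim is simply false for general monomial ideals, hence cannot follow from the formal properties you have used so far ($m\notin I$, $I:(m)=\frak{m}$, $u\in G(I)$). For instance, in $\kk[x,y]$ take $I=(x^4,x^3y,xy^3,y^4)$, $m=x^2y^2$ and $u=x^4$: then $m\notin I$ and $I:(m)=(x,y)$, so $m$ is a legitimate witness, yet $mu=x^6y^2=(x^3y)^2\in I^2$, so your candidate $mu^{k-1}$ fails already for $k=2$. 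Any correct completion must therefore use the lexsegment structure in an essential way, and that is exactly the content which is missing; what you call "the real work" is the entire proof.

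The paper avoids this obstacle altogether. It invokes the criterion of \cite[Proposition 3.2]{EOS}, namely $\depth(S/I)=0$ if and only if $x_nu\geq_{lex}x_1v$, and then writes down for each $k$ an explicit monomial of degree $kd-1$: the monomial $u^k/x_1$ when $\nu_1(u)>1$, and $u(x_2^{d-1}x_n)^{k-1}/x_1$ when $\nu_1(u)=1$. Since $I^k$ is generated in degree $kd$, non-membership in $I^k$ is free by the degree count (the trivial half of your case split), and the inequality $x_nu\geq_{lex}x_1v$ is exactly what is needed to check that multiplication by each variable lands in $I^k$. In effect, your argument can be repaired by making one specific choice of witness: $m=u/x_1$. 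Verifying that this $m$ satisfies $I:(m)=\frak{m}$ requires showing $x_iu/x_1\in\mathcal{L}(u,v)$ for all $i\geq2$, which is where the EOS inequality enters; once that is done, your $m_k=u^k/x_1$ has degree $kd-1$ and you never leave the trivial case. As written, however, with an arbitrary witness $m$ and no use of the characterization of $\depth(S/I)=0$, the proof is incomplete.
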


\begin{proof} According to \cite[Proposition 3.2]{EOS}, $\depth(S/I)=0$ if and only if $x_nu\geq_{lex}x_1v$. 

Let us fix an arbitrary integer $k\geq 2$.
Firstly, let us assume that $\nu_1(u)>1$. In this case, the inequality $x_nu\geq_{lex}x_1v$ is obviously fulfilled. Let $m=u^k/x_1$, with $\deg(m)=kd-1$. Thus $m\notin I^k$. Since $x_im=(x_iu/x_1)u^{k-1}\in I^k$ for any $1\leq i\leq n$, we get that $\frak{m}\subseteq I^k:(m)$. The other inclusion being trivial, we get $\depth(S/I^k)=0$.

Let now $\nu_1(u)=1$, thus $u\leq_{lex}x_1x_2^{d-1}$. Since $x_nu\geq_{lex}x_1v$, we must have $v\leq_{lex}x_2^{d-1}x_n$. Let $m=u(x_2^{d-1}x_n)^{k-1}/x_1$ with $\deg(m)=kd-1$, thus $m\notin I^k$. Since $x_1m=u(x_2^{d-1}x_n)^{k-1}\in I^k$ and $x_im=(x_nu/x_1)(x_2^{d-1}x_i)(x_2^{d-1}x_n)^{k-2}\in I^k$, we get that $\frak{m}\subseteq I^k:(m)$. The other inclusion being obviously true, we get $\depth(S/I^k)=0$. 
\end{proof}

The proofs of the next results work as follows. In order to show that $\depth(S/I^k)=0$ we provide a monomial $m\in S$ of degree $dk-1$, thus $m\notin I^k$, such that $\frak{m}\subseteq I:(m)$. Since the other inclusion $I:(m)\subseteq\frak{m}$ is obviously true, we get that $\frak{m}\in\Ass(S/I^k)$, hence $\depth(S/I^k)=0$. 
Therefore, in the following proofs, we only show which is a right choice for the monomial $m$ in each case.

Firstly, for the case when $d=2$, we determine all the lexsegment ideals $I$ such that $\depth(S/I^k)=0$ for some $k\geq1$.

\begin{Proposition}\label{compd=2} Let $u,v\in\mathcal{M}_2$, $u\geq_{lex}v$, and $I=(\mathcal{L}(u,v))$ be the corresponding lexsegment ideal. 
\begin{itemize}
	\item[(a)] If $u\geq_{lex}x_1x_2$, then $\depth(S/I^{k})=0$, for any $k\geq2$. 
	\item[(b)] If $u=x_1x_3$, $x_2^2>_{lex}v\geq_{lex} x_2x_{n-1}$, then $\depth (S/I^k)=0$ for any $k\geq3$.
	\item[(c)] If $u<_{lex}x_1x_3$, $x_2x_{\max(u)}\geq_{lex}v\geq_{lex} x_2x_{n-1}$, then $\depth (S/I^k)=0$ for any $k\geq3$.
	\item[(d)] If $u\leq_{lex}x_1x_3$ and $v\leq_{lex}x_2x_n$ then $\depth (S/I^k)=0$ for any $k\geq2$.
\end{itemize}
\end{Proposition}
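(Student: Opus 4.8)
The plan is to use the recipe stated just above the proposition: in each case I would exhibit one explicit monomial $m$ of degree $2k-1$, so that $m\notin I^k$ for degree reasons, and verify that $x_im\in I^k$ for every $1\le i\le n$; this gives $\frak m\subseteq I^k:(m)$, and since the reverse inclusion is automatic from $m\notin I^k$, we get $\frak m=I^k:(m)\in\Ass(S/I^k)$, i.e.\ $\depth(S/I^k)=0$. Because $d=2$ and we assume $x_1\mid u$, $x_1\nmid v$, the generators of $I$ are precisely the degree-$2$ monomials $x_1x_j$ and $x_2x_j$ (together with possibly monomials of lower type) lying lexicographically between $u$ and $v$; the first thing I would do in each case is read off which of these lie in $I$. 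The whole verification then reduces to writing down an explicit factorization of each $x_im$ into $k$ of these quadratic generators, which is pure degree bookkeeping.

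For the two cases with threshold $k\ge2$ a single monomial does the job. In case (a), if $u=x_1^2$ then every $x_1x_j\in I$ and $m=x_1^{2k-1}$ works via $x_im=(x_1x_i)(x_1^2)^{k-1}$ (this is also immediate from Proposition \ref{depthlex0}, since $\nu_1(u)>1$); if $u=x_1x_2$, then $x_2^2\in I$ and all $x_1x_j\in I$, and I would take $m=x_1x_2^{2k-2}$, checking $x_1m=(x_1x_2)^2(x_2^2)^{k-2}$, $x_2m=(x_1x_2)(x_2^2)^{k-1}$ and $x_jm=(x_1x_j)(x_2^2)^{k-1}$ for $j\ge3$. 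In case (d), where $u\le_{lex}x_1x_3$ and $v\le_{lex}x_2x_n$, the hypotheses give $x_1x_n,\,x_2^2,\,x_2x_3,\dots,x_2x_n\in I$, and I would take $m=x_2^{2k-2}x_n$, with $x_1m=(x_1x_n)(x_2^2)^{k-1}$, $x_2m=(x_2x_n)(x_2^2)^{k-1}$, and $x_jm=(x_2x_j)(x_2x_n)(x_2^2)^{k-2}$ for $j\ge3$.

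The genuinely delicate cases are (b) and (c), which I would treat together by writing $u=x_1x_a$ with $a=\max(u)$ (so $a=3$ in (b) and $a\ge4$ in (c)) and $v=x_2x_c$ with $a\le c\le n-1$. The key structural observation is that $x_2x_n\notin I$, so the only generator divisible by $x_n$ is $x_1x_n$; hence any factorization of $x_nm$ must use $x_1x_n$, which forces $x_1\mid m$. I would therefore look for $m=x_1m'$ with $m'$ a product of $k-1$ generators, and my candidate is $m'=(x_2x_a)^{k-1}$, that is $m=x_1x_2^{k-1}x_a^{k-1}$. With this shape the conditions $x_jm=(x_1x_j)(x_2x_a)^{k-1}\in I^k$ for the large indices $a\le j\le n$ are immediate, the intermediate ones are handled by $x_jm=(x_1x_a)(x_2x_j)(x_2x_a)^{k-2}$ for $3\le j\le a-1$ (an empty range in (b)), and the two remaining checks are the refactorings
\[
x_1m=(x_1x_a)^2(x_2x_a)^{k-3}(x_2^2),\qquad x_2m=(x_1x_a)(x_2x_a)^{k-2}(x_2^2).
\]

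The main obstacle is exactly the factorization of $x_1m$. Multiplying $m=x_1m'$ by $x_1$ produces two copies of $x_1$, and each must be paired with a variable of index $\ge a$ to form an admissible generator $x_1x_\ell$; the $x_2$'s freed in the process then have to be reabsorbed into factors $x_2^2$. With $m'=(x_2x_a)^{k-1}$ this is possible precisely through $(x_1x_a)^2(x_2x_a)^{k-3}(x_2^2)$, which requires $k-3\ge0$: for $k=2$ one has $x_1m=x_1^2x_2x_a$ with only a single index-$\ge a$ variable available, so the two $x_1$'s cannot both be paired and the construction breaks down. Thus the construction works exactly when $k\ge3$, which is the threshold in (b) and (c); confirming these factorizations together with the stated lexicographic memberships ($x_2x_j\in I$ for $j<a\le c$, $x_1x_a=u\in I$, and $x_2x_a\in I$ since $a\le c$) is the only step requiring real care.
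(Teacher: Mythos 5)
Your proposal is correct and follows essentially the same route as the paper: exhibit a witness monomial $m$ of degree $2k-1$ (hence $m\notin I^k$) with $\frak{m}\subseteq I^k:(m)$, verified by explicit factorizations into quadratic generators. The only differences are cosmetic --- in cases (b)/(c) you use $m=x_1(x_2x_a)^{k-1}$ where the paper uses $m=x_2^2u^{k-1}/x_1=x_1^{k-2}x_2^2x_a^{k-1}$ (these agree for $k=3$), and in case (a) you split into the subcases $u=x_1^2$ and $u=x_1x_2$ which the paper treats uniformly; all your factorizations and lexicographic membership checks are valid.
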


\begin{proof}  We now show which is a right choice for the monomial $m$ of degree $2k-1$ such that $\frak{m}\subseteq I^k:(m)$ for some $k$ in each case.

 (a) Let $m=x_1(x_2^{2})^{k-1}$. Then $x_1m=(x_1x_2)^2(x_2^{2})^{k-2}\in I^k$ and, for $2\leq i\leq n$, $x_im=(x_1x_i)(x_2^{2})^{k-1}\in I^k$. 

(b) Let $m=x_2^{2}u^{k-1}/x_1$. Then $x_1m=x_2^{2}u^{k-1}\in I^k$, $x_2m=(x_2x_3)(x_2^2)u^{k-2}\in I^k$ and $x_im=(x_2x_3)^2(x_1x_i)u^{k-3}\in I^k$ for every $3\leq i\leq n$. 

(c) Let $m=x_2^{2}u^{k-1}/x_1$. Then one has that the monomials $x_1m=x_2^{2}u^{k-1}$ and $x_2m=(x_2x_j)(x_2^2)u^{k-2}$ are in $I^k$. For $i\leq j$, one may write $x_im=(x_2x_i)(x_2x_j)u^{k-2}\in I^k$, and if $i>j$, $x_im=(x_2x_j)^2(x_1x_i)u^{k-3}\in I^k$. 

(d) Let $m=(x_2^{2})^{k-1}x_n$. Then we have that $x_1m=(x_2^{2})^{k-1}(x_1x_n)\in I^k$ and $x_im=(x_2^2)^{k-2}(x_2x_n)(x_2x_i)\in I^k$ for any $2\leq i\leq n$. 
\end{proof}

By using Corollary~\ref{d=2norm} and Corollary~\ref{depthnoncomp}, one may note that these are the only lexsegment ideals $I$ of degree $2$ such that $\depth(S/I^k)=0$ for some $k$.

Next, we will consider $d\geq3$ and $w<_{lex}x_2u/x_1$. Let $w$ be the largest monomial of degree $d$ such that $w<_{lex}v$. We are going to treat each of the following cases: $w<_{lex}x_2u/x_1$, $w=x_2u/x_1$, and $w>_{lex}x_2u/x_1$. 

In the next proposition we take $w<_{lex}x_2u/x_1$, thus $v\leq_{lex}x_2u/x_1$.

\begin{Proposition}\label{d>=3} Let $d\geq 3$ and $I=(\mathcal{L}(u,v))$ be a lexsegment ideal such that $v\leq_{lex}x_2u/x_1$. Then $\depth(S/I^{k})=0$, for all $k\geq2$. 
\end{Proposition}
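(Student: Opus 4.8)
The plan is to use the method described just before Proposition~\ref{compd=2}: produce a monomial $m$ of degree $dk-1$ (so that $m\notin I^k$ for degree reasons) with $x_\ell m\in I^k$ for every $1\le\ell\le n$, which gives $\frak{m}\subseteq I^k:(m)$ and hence $\depth(S/I^k)=0$. The hypothesis $v\le_{lex}x_2u/x_1$ enters, in every case, through the fact that $x_2u/x_1$ is a generator of $I$: it is $\le_{lex}u$ because it has smaller $x_1$-exponent than $u$, and $\ge_{lex}v$ by assumption.

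First I would dispose of the case $\nu_1(u)\ge2$: then $\nu_1(x_nu)=\nu_1(u)\ge2>1=\nu_1(x_1v)$ (recall $x_1\nmid v$), so $x_nu\ge_{lex}x_1v$, which by \cite[Proposition 3.2]{EOS} means $\depth(S/I)=0$; Proposition~\ref{depthlex0} then already gives $\depth(S/I^k)=0$ for all $k\ge1$. So I may assume $\nu_1(u)=1$ and write $u=x_1u'$ with $x_1\nmid u'$; the argument then splits according to whether $x_2\mid u'$.

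If $x_2\mid u'$, I would take $m=u^k/x_1=x_1^{k-1}(u')^k$. Then $x_1m=u^k$ and $x_2m=(x_2u')u^{k-1}$ lie in $I^k$ since $x_2u'\in I$. For $\ell>2$ the degree-$d$ monomial $x_\ell u'=x_\ell u/x_1$ need not be a generator, so I redistribute one copy of $u$: from the identity $x_\ell m=u^{k-2}\bigl(x_1x_\ell u'/x_2\bigr)(x_2u')$ the factor $x_2u'$ is a generator, and $x_1x_\ell u'/x_2$ is a generator as well, because it carries an $x_1$ (hence is $\ge_{lex}v$) and $x_\ell u'/x_2<_{lex}u'$ (we traded an $x_2$ for an $x_\ell$ with $\ell>2$), so it is $\le_{lex}u$. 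Thus $x_\ell m$ is a product of $k$ generators.

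The genuinely delicate case is $x_2\nmid u'$, where $m=u^k/x_1$ no longer works: for instance, as soon as $x_2\mid v$ every generator of $I$ has positive $x_1$- or $x_2$-degree, so every element of $I^k$ has $\nu_1+\nu_2\ge k$, whereas $x_\ell(u^k/x_1)$ has $\nu_1+\nu_2=k-1$ for $\ell\ge3$. Here I would instead build $m$ from the generator $x_2u'$ and a pure power of $x_2$, setting $m=x_2^{\,d-1}(x_2u')^{k-1}$. The point is that $\nu_2(x_2u')=1$ (as $x_2\nmid u'$), so $v\le_{lex}x_2u'$ forces $\nu_2(v)\le1$; since $d\ge3$ this gives $\nu_2(x_2^{\,d-1}x_\ell)\ge d-1\ge2>\nu_2(v)$ and $\nu_2(x_2^{\,d})=d\ge2>\nu_2(v)$, so both $x_2^{\,d-1}x_\ell$ (for $\ell\ge2$) and $x_2^{\,d}$ are $\ge_{lex}v$, and being $x_1$-free they are $\le_{lex}u$; hence all of them are generators. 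Consequently $x_\ell m=(x_2^{\,d-1}x_\ell)(x_2u')^{k-1}\in I^k$ for $\ell\ge2$, while for $\ell=1$ the identity $x_1x_2^{\,d-1}\cdot(x_2u')=u\cdot x_2^{\,d}$ yields $x_1m=u\,x_2^{\,d}(x_2u')^{k-2}\in I^k$. The recurring subtlety, and the reason $d\ge3$ is needed, is exactly to guarantee that each degree-$d$ factor one splits off is squeezed between $v$ and $u$.
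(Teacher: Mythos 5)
Your proof is correct, and it follows the paper's general template (exhibit a monomial $m$ of degree $dk-1$, hence $m\notin I^k$, with $\frak{m}\subseteq I^k:(m)$; your reduction of the case $\nu_1(u)>1$ to Proposition~\ref{depthlex0} is identical to the paper's), but your case division and witness monomials are genuinely different. The paper splits on $v$: when $x_2^{d-1}x_n\geq_{lex}v$ it takes $m=x_2^du^{k-1}/x_1$, using the factorization $x_im=(x_2u/x_1)(x_2^{d-1}x_i)u^{k-2}$; in the remaining case it first pins down $v=x_2^{d-1}x_j$ and $u=x_1x_2^{d-2}x_t$ with $t\leq j$, and switches to the monomial $m'=(x_1x_2x_n^{d-2})^{k-1}u/x_1$. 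You instead split on $u$ (writing $u'=u/x_1$): if $x_2\mid u'$ you take $m=u^k/x_1$, redistributing one copy of $u$ through the generator $x_1x_\ell u'/x_2$, and if $x_2\nmid u'$ you take $m=x_2^{d-1}(x_2u')^{k-1}$, where the single observation that $v\leq_{lex}x_2u'$ forces $\nu_2(v)\leq1$, combined with $d\geq3$, makes every $x_2^{d-1}x_\ell$ with $\ell\geq2$ (and $x_2^d$) a generator. I checked your factorizations: $x_\ell(u^k/x_1)=u^{k-2}(x_1x_\ell u'/x_2)(x_2u')$ with both degree-$d$ factors in $\mathcal{L}(u,v)$, and $x_1\cdot x_2^{d-1}(x_2u')^{k-1}=u\,x_2^{d}\,(x_2u')^{k-2}$; both need only $k\geq2$, as in the statement. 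The two arguments are of comparable length and interchangeable; yours has the small advantage that its exceptional case avoids the explicit classification of the pair $(u,v)$ that the paper carries out, while the paper's generic choice covers with one uniform monomial some ideals (those with $x_2\nmid u'$ but $v\leq_{lex}x_2^{d-1}x_n$) for which you must invoke your second construction.
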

\begin{proof} Let us fix $k\geq 2$. We only indicate a right monomial $m$ of degree $dk-1$ such that $\frak{m}\subseteq I^k:(m)$ for some $k$.

If $\nu_1(u)>1$, then $\depth(S/I)=0$ and the statement follows from Proposition~\ref{depthlex0}. Therefore, we may assume that $\nu_1(u)=1$.

Let $m=x_2^du^{k-1}/x_1$. Then $x_1m=u^{k-1}x_2^{d}\in I^k$ and, for any $2\leq i\leq n$, $x_im=(x_2u/x_1)(x_2^{d-1}x_i)u^{k-2}\in I^k$. 
If $x_2^{d-1}x_n\geq_{lex}v$, then $x_im\in I^k$ for all $i$. 

Let us assume that there exists $2\leq i\leq n$ such that $x_2^{d-1}x_i<_{lex}v$, that is $v=x_2^{d-1}x_j$, for some $j<i$. Since $v\leq_{lex}x_2u/x_1$, the monomial $u$ must be of the form $u=x_1x_2^{d-2}x_t$, with $t\leq j<i$. Therefore $x_1x_2^{d-2}x_i<_{lex}u$. In this case, we take $m'=(x_1x_2x_n^{d-2})^{k-1}u/x_1$ for which we get that the monomials $x_1m'=(x_1x_2x_n^{d-2})^{k-1}u$ and $x_im'=(x_1x_ix_n^{d-2})(x_1x_2x_n^{d-2})^{k-2}(x_2u/x_1)$ are in $ I^k$, for any $2\leq i\leq n$.  
\end{proof}

We consider next the case when $w=x_2u/x_1$, where, as above, $w$ is the largest monomial of degree $d$ such that $w<_{lex}v$.

\begin{Proposition}\label{d<=4} Let $d\geq3$ and $I=(\mathcal{L}(u,v))$ be a lexsegment ideal generated in degree $d$ such that $w=x_2u/x_1$. Then $\depth(S/I^k)=0$ for any $k\geq d$.
\end{Proposition}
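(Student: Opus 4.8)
The plan is to use the recipe stated after Proposition~\ref{depthlex0}: for the given $k$ I exhibit a monomial $m$ of degree $dk-1$ (so automatically $m\notin I^k$) with $x_im\in I^k$ for every $i$, whence $\frak{m}=I^k:(m)\in\Ass(S/I^k)$ and $\depth(S/I^k)=0$. First note that the hypothesis already forces $\nu_1(u)=1$: since $w=x_2u/x_1$ must satisfy $w<_{lex}v$ while $x_1\nmid v$, the monomial $w$ cannot be divisible by $x_1$ (an $x_1$-multiple is $>_{lex}$ every monomial of the same degree not divisible by $x_1$), so $\nu_1(u)-1=\nu_1(w)=0$. Write $u=x_1u'$ with $x_1\nmid u'$, so $w=x_2u'$ and $v$ is the lex-successor of $x_2u'$. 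Two facts will be used repeatedly: $x_2^d\in I$ (it is the largest monomial not divisible by $x_1$, hence $u>_{lex}x_2^d\geq_{lex}v$), and every $x_1$-multiple of degree $d$ that is $\leq_{lex}u$ lies in $I$ (being $x_1$-divisible it is automatically $>_{lex}v$). I would then distinguish two cases, according to whether $x_2\mid u$.

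Case 1: $x_2\mid u$, say $a:=\nu_2(u')\geq1$. The candidate is $m=u^k/x_1=x_1^{k-1}(u')^k$, of degree $dk-1$. To see $x_im\in I^k$ I would peel off one generator $x_2^d$ and write
\[
x_im=x_2^d\cdot x_1^{k-1}\cdot\frac{(u')^kx_i}{x_2^d},
\]
which is legitimate once $x_2^d\mid (u')^kx_i$, i.e. once $ka\geq d$. It then remains to split the degree-$(d-1)(k-1)$ monomial $(u')^kx_i/x_2^d$ into $k-1$ monomials, each of degree $d-1$ and each $\leq_{lex}u'$, so that after multiplication by $x_1$ they become generators. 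I would produce such a splitting starting from $k-1$ copies of $u'$ and performing the required swaps that trade one factor $x_2$ for one factor $x_j$ with $j\geq3$ (supplied by $x_i$ and by the surplus of the $u'$'s); each swap only decreases a factor in the lex order, so every piece stays $\leq_{lex}u'$, and the number of swaps is at most $d-a$, which fits into the available $(k-1)a$ copies of $x_2$ exactly because $ka\geq d$. Both constraints reduce to $ka\geq d$, guaranteed by $k\geq d$ since $a\geq1$; the instance $a=1$ (e.g. $u=x_1x_2x_3^{\,d-2}$) shows the threshold $k\geq d$ is genuinely needed.

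Case 2: $x_2\nmid u$, so $u'\in\kk[x_3,\dots,x_n]$. The structural remark here is that \emph{every} monomial $g$ with $\nu_1(g)=0$ and $\nu_2(g)\geq2$ is a generator of $I$: from $\nu_2(u')=0$ one computes $\nu_2(v)\leq2$ (indeed $\nu_2(v)=1$ unless $\max(u)=3$, in which case $u=x_1x_3^{\,d-1}$ and $v=x_2^2x_n^{\,d-2}$), and any such $g$ is then $\geq_{lex}v$. The candidate is $m=x_2^{\,d(k-1)}u'$, again of degree $dk-1$. I would factor $x_1m=u\cdot(x_2^d)^{k-1}$; and for $i\geq2$, since $\nu_1(m)=0$, I would write $x_im$ as a product of $k$ generators each having $x_2$-exponent $\geq2$ (hence genuine generators by the remark). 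The $x_2$-exponents sum to $\nu_2(x_im)=d(k-1)$ or $d(k-1)+1$, the surplus over the minimum $2k$ being absorbed by taking several factors equal to $x_2^d$, while the at most $d$ variables from $x_{\geq3}$ occurring in $x_im$ are parcelled out among the remaining factors in blocks of size $\leq d-2$; both operations have enough room precisely when $k\geq d$.

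The step I expect to be the real obstacle is the splitting in Case 1: checking, uniformly in $i$ and in the shape of $u'$, that $(u')^kx_i/x_2^d$ always decomposes into $k-1$ factors all $\leq_{lex}u'$, and confirming that $ka\geq d$ is exactly the right bound (sharp when $\nu_2(u')=1$). By comparison Case 2 is softer once the implication $\nu_2(g)\geq2\Rightarrow g\geq_{lex}v$ is established; there the only care needed is to verify that each monomial written down is an honest element of $G(I)$, i.e. lies lex-between $u$ and $v$.
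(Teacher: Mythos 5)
Your proof is correct, and it takes a genuinely different route from the paper's. Both implement the same recipe (exhibit a monomial $m$ of degree $dk-1$ with $\frak{m}\subseteq I^k:(m)$), but the paper splits into the cases $v>_{lex}x_2^{d-1}x_n$ and $v\leq_{lex}x_2^{d-1}x_n$, takes $m=x_1x_n^{d-2}(x_2^d)^{k-1}$, respectively $m=(x_2^{d-1}x_n)^{k}/x_n$, and verifies $x_im\in I^k$ by one closed-form factorization into generators in each case; you split instead on whether $x_2\mid u$, take $m=u^k/x_1$, respectively $m=x_2^{d(k-1)}u/x_1$, and prove membership by an existence/counting argument. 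Your counting does close: in Case 1 the number of $x_2$'s to be removed from the $k-1$ blocks equals the number of indices $\geq 3$ to be inserted, each swap only lowers a block in the lex order, and feasibility is exactly $ka\geq d$, which $k\geq d$ and $a\geq 1$ guarantee; in Case 2 the key fact $\nu_2(v)\leq 2$ (with $v=x_2^2x_n^{d-2}$ exactly when $u=x_1x_3^{d-1}$) is right, and the parcelling constraint $d\leq k(d-2)$ holds for $k\geq d\geq 3$. As for what each approach buys: the paper's explicit factorizations are quicker to check when they are valid, while yours is more robust and proves slightly more --- your Case 1 uses the hypothesis $w=x_2u/x_1$ only through $\nu_1(u)=1$, so it shows $\depth(S/I^k)=0$ for all $k\geq d$ whenever $\nu_1(u)=1$ and $x_2\mid u$. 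Notably, your argument also survives at $d=3$, where the paper's first case actually breaks down: there $v>_{lex}x_2^{2}x_n$ forces $u=x_1x_2x_j$ with $j\geq 3$, so the paper's second factor $x_1x_2^2x_n^{d-3}=x_1x_2^2$ is $>_{lex}u$, hence not a generator, and one can check that $x_1m=x_1^2x_n(x_2^3)^{k-1}$ is then not in $I^k$ at all; your witness $m=u^k/x_1$ does work in that situation. So your proof not only differs from the paper's, it also covers a subcase where the paper's chosen monomial fails.
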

\begin{proof} Let us fix $k\geq2$. As before, we find a right monomial $m$ of degree $dk-1$ such that $\frak{m}\subseteq I^k:(m)$ for some $k$.

 If $v>_{lex}x_2^{d-1}x_n$, we consider the monomial $m=x_1x_n^{d-2}(x_2^d)^{k-1}$. Then $x_1m=(x_1x_2^{d-2}x_n)(x_1x_2^2x_n^{d-3})(x_2^d)^{k-2}\in I^k,$ since $w\geq_{lex} x_2^{d-1}x_n$ implies $u\geq_{lex}x_1x_2^{d-2}x_n$. We also have $x_im=(x_1x_ix_{n}^{d-2})(x_2^{d})^{k-1}\in I^k$ for any $i\geq2$.

If $v\leq_{lex}x_2^{d-1}x_n$, let us choose $m=(x_2^{d-1}x_n)^{k}/x_n$. One may note that $x_1m=(x_1x_n^{d-1})(x_2^d)^{d-1}(x_2^{d-1}x_n)^{k-d}\in I^k$ and $x_im=(x_2^{d-1}x_i)(x_2^{d-1}x_n)^{k-1}\in I^k$ for any $i\geq2$.
\end{proof}

In the sequel, we study the depth in the case when $w>_{lex}x_2u/x_1$. In particular $\nu_1(u)=1$, therefore, if we denote $M=\min(u/x_1)$, we get $M\geq2$.

\begin{Proposition} Let $d\geq 3$ be an integer and $I=(\mathcal{L}(u,v))$ be a lexsegment ideal generated in degree $d$ such that $w>_{lex}x_2u/x_1$.
\begin{itemize}
	\item[(a)]  If $v\leq_{lex}x_2^{d-1}x_n$, then $\depth(S/I^k)=0$ for any $k\geq d$.
	\item[(b)]  If $x_2^{d-1}x_n<_{lex}v\leq_{lex}x_2^{d-1}x_M$, then $\depth(S/I^k)=0$ for any $k\geq2d$.
	\item[(c)]  If $x_2\mid u$ and $v=x_2^{d}$, then $\depth(S/I^k)=0$ for any $k\geq d$.
\end{itemize}
\end{Proposition}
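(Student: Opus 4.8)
The plan is to follow the uniform strategy already in force: for a fixed $k$ in the stated range I will produce a monomial $m$ of degree $dk-1$ (so automatically $m\notin I^k$) such that $x_im\in I^k$ for every $1\le i\le n$; then $\frak{m}\subseteq I^k:(m)$, and since $m\notin I^k$ this forces $\frak{m}=I^k:(m)\in\Ass(S/I^k)$, i.e.\ $\depth(S/I^k)=0$. Because $w>_{lex}x_2u/x_1$ forces $\nu_1(u)=1$ and $M=\min(u/x_1)\ge 2$, a degree-$d$ monomial $\mu$ generates $I$ exactly when $v\le_{lex}\mu\le_{lex}u$; in particular every generator has $\nu_1\le 1$, the generators with $\nu_1=1$ are the $x_1p$ with $p\le_{lex}u/x_1$ and $\deg p=d-1$, and the remaining (``power'') generators are the degree-$d$ monomials not divisible by $x_1$ that are $\ge_{lex}v$. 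The whole game is to choose $m$ so that multiplying by each $x_i$ can be absorbed by swapping one factor of a chosen factorization for another admissible generator, the delicate point being multiplication by $x_1$, which raises $\nu_1$ by one.

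For part (a) the hypothesis $v\le_{lex}x_2^{d-1}x_n$ makes every $x_2^{d-1}x_i$ ($2\le i\le n$) and $x_2^d$ an admissible power generator, the most favorable situation. I would take
\[
m=x_2^{(d-1)k}\,x_n^{k-1}=(x_2^{d-1}x_n)^k/x_n .
\]
For $2\le i\le n$ one has $x_im=(x_2^{d-1}x_i)(x_2^{d-1}x_n)^{k-1}\in I^k$, and for $i=1$ I would convert $d-1$ of the available $x_n$'s into the generator $x_1x_n^{d-1}$ (always admissible, since $x_n^{d-1}\le_{lex}u/x_1$ and the factor carries $x_1$, hence is $\ge_{lex}v$) and spread the freed $x_2$-mass over copies of $x_2^{d-1}x_n$ and $x_2^d$, getting $x_1m=(x_1x_n^{d-1})(x_2^{d-1}x_n)^{k-d}(x_2^d)^{d-1}$. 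This consumes $d-1$ copies of $x_n$ and $k-d$ copies of $x_2^{d-1}x_n$, which is exactly where $k\ge d$ is needed.

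For part (c), where $v=x_2^d$ is as high as possible, the only power generator is $x_2^d$ itself, so multiplication by any $x_i$ with $i\ge 3$ must be absorbed into an $x_1$-generator; this forces $\nu_1(m)\ge 1$. Writing $u/x_1=x_2^{c_2}\cdots$ with $1\le c_2\le d-2$ (the bound $c_2\le d-2$ being a consequence of $w>_{lex}x_2u/x_1$ together with $v=x_2^d$), I would seek $m$ of the shape $x_1^ax_2^bx_n^c$ and solve the capacity inequalities: each $x_1$-generator $x_1p$ can carry at most $c_2$ powers of $x_2$ (otherwise $p>_{lex}u/x_1$), so $a$ must be large enough to reabsorb the $x_2$-mass released when $x_1$ is multiplied in. This works with $a=d-1$, $b=d(k-d+1)$, $c=d(d-2)$; for instance $x_1m=(x_1x_2x_n^{d-2})^{d}(x_2^d)^{k-d}$ and, for $i\ge 3$, $x_im=(x_1x_n^{d-2}x_i)(x_1x_n^{d-1})^{d-2}(x_2^d)^{k-d+1}$, with analogous substitutions for $i=1,2$; here $k\ge d$ keeps $k-d$ copies of $x_2^d$ available after the extra $x_1$-generator is introduced. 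Part (b) is handled in the same spirit but is the genuinely hard case: since $x_2^{d-1}x_n<_{lex}v\le_{lex}x_2^{d-1}x_M$, the surviving power generators form only the short chain $x_2^{d-1}x_l$ with $2\le l\le j$ (where $v=x_2^{d-1}x_j$), and when $x_2\nmid u$ even the $x_2$-carrying $x_1$-generators vanish, so $m$ must be built from a richer set of variables (involving $x_j$ and $x_M$) and split into two blocks, which is the source of the weaker bound $k\ge 2d$.

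The main obstacle throughout is the $i=1$ verification. Multiplying by $x_1$ raises $\nu_1$ by one, which in any factorization of $x_1m$ into $k$ generators forces one more $x_1$-generator and hence one fewer power generator; the $d$ units of $x_2$-mass released by dropping a power generator must then be reabsorbed by the $x_1$-generators $x_1p$, each capped at $\nu_2(u/x_1)$ powers of $x_2$ by $p\le_{lex}u/x_1$. Verifying that this redistribution is always possible, and that every factor produced still lies in the lexsegment between $v$ and $u$, is precisely what pins down the thresholds $k\ge d$ in (a) and (c) and $k\ge 2d$ in (b); in (b) it is the shortness of the admissible chain of power generators that makes a single block insufficient and forces the doubled bound.
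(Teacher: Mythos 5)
Your general framework (exhibit a monomial $m$ of degree $dk-1$, hence $m\notin I^k$, with $\frak{m}\subseteq I^k:(m)$) is exactly the paper's strategy, and your parts (a) and (c) are complete and correct: the identities $x_im=(x_2^{d-1}x_i)(x_2^{d-1}x_n)^{k-1}$, $x_1m=(x_1x_n^{d-1})(x_2^{d-1}x_n)^{k-d}(x_2^d)^{d-1}$ in (a), and the factorizations of $x_im$ for $m=x_1^{d-1}x_2^{d(k-d+1)}x_n^{d(d-2)}$ in (c), all check out (your monomials differ slightly from the paper's, which uses $x_n^{d-1}(x_2^d)^{k-1}$ in (a) and $u^k/x_1$ in (c), but the verifications are equally valid and use the hypotheses in the same way).

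However, part (b) is a genuine gap, and it is the heart of the proposition: it is the only case where the threshold jumps to $k\geq 2d$, i.e.\ exactly the case your own analysis identifies as ``genuinely hard.'' You never define the monomial $m$, never exhibit a single factorization of $x_im$ into $k$ generators, and the phrase ``built from a richer set of variables (involving $x_j$ and $x_M$) and split into two blocks'' does not determine a construction that anyone could verify; in particular it does not explain how the $i=1$ multiplication is absorbed when $x_2\nmid u$, which is precisely the obstruction you flag. For comparison, the paper takes $m=(x_2^d)^{k-d}u^d/x_1$ (so $m$ involves $x_2$ and the support of $u$, not $x_j$), for which $x_1m=(x_2^d)^{k-d}u^d$ is immediate, and handles $x_im$ by trading copies of $x_2^d$ and $u$ for the generators $x_2^{d-1}x_M$, $x_2^{d-1}x_i$ (for $i\leq M$), $x_iu/x_M$ (for $i>M$) and $x_tu/x_M$ ($t\in\supp(u)$, $t>M$); the hypothesis $v\leq_{lex}x_2^{d-1}x_M$ is what makes $x_2^{d-1}x_M$ a legitimate generator, and the bookkeeping of how many copies of $x_2^d$ survive is what forces $k\geq 2d$. (As an aside, the paper's displayed exponents of $u$ in these factorizations are off by one --- the $x_1$-count only balances with $u^{a_M}$ in the case $2\leq i\leq M$, and with $(x_2^{d-1}x_M)^d u^{a_M-1}$ in the case $i>M$ --- but the construction is correct after this adjustment.) Until you produce an explicit $m$ for case (b) and carry out such a verification for every $i$, your argument proves only parts (a) and (c) of the statement.
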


\begin{proof}  In each case, we choose a right monomial $m$ of degree $dk-1$ such that $\frak{m}\subseteq I^k:(m)$ for some $k$.

(a) Let $k\geq d$ and $m=x_n^{d-1}(x_2^{d})^{k-1}$. Then $x_1m=(x_1x_n^{d-1})(x_2^{d})^{k-1}\in I^k$ and, taking into account that $k\geq d$, $x_im=(x_2^{d-1}x_n)^{d-1}(x_2^{d-1}x_i)(x_2^{d})^{k-d}\in I^k$, for all $2\leq i\leq n$.

(b) Let $k\geq2d$ and $m=(x_2^{d})^{k-d}u^d/x_1$ and assume $u=x_1x_M^{a_M}\cdots x_n^{a_n}$, thus $a_M\leq d-1$. Then $x_1m=(x_2^{d})^{k-d}u^d\in I^k$,  $$x_im=(x_2^{d-1}x_M)^{d-1}(x_2^{d-1}x_i)(x_2^{d})^{k-2d+1}u^{a_M+1}\prod_{\twoline{t\in\supp(u)}{t>M}}\left(\frac{x_tu}{x_M}\right)^{a_t}\in I^k,$$for all $2\leq i\leq M$, and, if $M<i\leq n$, $$x_im=(x_2^{d-1}x_M)^{d-1}\frac{x_iu}{x_M}(x_2^{d})^{k-2d+1}u^{a_M}\prod_{\twoline{t\in\supp(u)}{ t>M}}\left(\frac{x_tu}{x_M}\right)^{a_t}\in I^k.$$

(c) Let us fix $k\geq d$. Let $u=x_1x_M^{a_M}\cdots x_n^{a_n}$ and $m=u^k/x_1$. Then $x_1m\in I^k$ and, taking into account that $k\geq d$ and $x_2\mid u$,  $$x_im=(x_2^d)\frac{x_iu}{x_2}\prod_{j\in\supp(u/(x_1))}\left(\frac{x_ju}{x_2}\right)^{a_j}.$$ Since $x_ju/x_2\leq_{lex}u$, we get that $x_im\in I^k$.
\end{proof}

According to Corollary~\ref{d=2norm} and Corollary~\ref{depthnoncomp}, one may note that these are the only lexsegment ideals $I$ of degree $d\geq3$ such that $\depth(S/I^k)=0$ for some $k$.

Taking into account all the above results, we may conclude: \begin{Theorem}\label{depthpower0} Let $I=(\mathcal{L}(u,v))$ be a lexsegment ideal which does not satisfy any of the following conditions:
\begin{itemize}
 \item[(i)] $x_2^d\geq_{lex} v>_{lex}x_{2}^{d-1}x_{\min(u/x_1)}$ and $w>_{lex}x_2u/x_1$, where $w$ is the greatest monomial of degree $d$ with $w<_{lex}u$;
 \item[(ii)] $d=2$, $u\leq_{lex}x_1x_3$ and $v=x_2^2$.
 \end{itemize}
 Then there exists some $k\geq1$ such that $\depth(S/I^k)=0$. Moreover, if $\depth(S/I^k)=0$ for some $k\geq1$, then $\depth(S/I^{k+j})=0$, for any $j\geq1$.
\end{Theorem}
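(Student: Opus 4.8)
The statement has two halves, and I would prove the formal ``moreover'' part first, then the existence part by assembling the Propositions already established.

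For the monotonicity, suppose $\depth(S/I^{k})=0$, so $\mm\in\Ass(S/I^{k})$ and $I^{k}\colon(m)=\mm$ for some monomial $m$. No socle element can lie below degree $dk-1$, since $x_{i}m\in I^{k}$ forces $\deg(x_{i}m)\ge dk$; and this bound is attained by the explicit monomials produced above, so I may take $\deg m=dk-1$. Fix any $u\in G(I)$ and put $m'=mu$, of degree $d(k+1)-1$. For every $i$ one has $x_{i}m'=(x_{i}m)\,u\in I^{k}I=I^{k+1}$, whence $\mm\subseteq I^{k+1}\colon(m')$. Since $\deg m'=d(k+1)-1$ lies strictly below the generating degree $d(k+1)$ of $I^{k+1}$, we get $m'\notin I^{k+1}$; thus $I^{k+1}\colon(m')$ is a proper monomial ideal containing $\mm$, hence equals $\mm$, and $\depth(S/I^{k+1})=0$. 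Induction on $j$ then finishes this half.

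For the existence part the plan is one exhaustive case check, confirming that any $I$ failing both (i) and (ii) meets the hypotheses of one of Propositions \ref{depthlex0}, \ref{compd=2}, \ref{d>=3}, \ref{d<=4}, or of the last Proposition of the section. First I would dispose of $\nu_{1}(u)\ge 2$: there $x_{n}u\geq_{lex}x_{1}v$ holds automatically, so $\depth(S/I)=0$ by \cite[Prop.\ 3.2]{EOS} and Proposition \ref{depthlex0} applies. Assuming $\nu_{1}(u)=1$, write $M=\min(u/x_{1})$ and split on the degree. For $d=2$, the four items of Proposition \ref{compd=2} cover every $I$ with $u\geq_{lex}x_{1}x_{2}$, and for $u\leq_{lex}x_{1}x_{3}$ they cover all $v\leq_{lex}x_{2}x_{\max(u)}$ (note $\max(u)=M$ here); what is left is exactly the range $x_{2}^{2}\geq_{lex}v>_{lex}x_{2}x_{M}$, which is (i) in degree $2$, together with the single value $v=x_{2}^{2}$ of (ii). For $d\ge 3$ I would run the section's trichotomy on $w$ against $x_{2}u/x_{1}$: the regime $w<_{lex}x_{2}u/x_{1}$ is Proposition \ref{d>=3}, the regime $w=x_{2}u/x_{1}$ is Proposition \ref{d<=4}, and the regime $w>_{lex}x_{2}u/x_{1}$ is handled, through its subcases (a)--(c), by the last Proposition, whose only uncovered configuration is precisely (i) (its subcase $x_{2}\mid u,\ v=x_{2}^{d}$ being what forces the clause $x_{2}\nmid u$ in (i)).

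The step I expect to be the genuine obstacle is verifying that these boundaries abut with no gap, so that the complement of the five hypotheses is \emph{exactly} (i) together with (ii). The sensitive point is the coincidence $x_{2}^{d-1}x_{M}=x_{2}u/x_{1}$, which holds only when $d=2$: for $d\ge 3$ the configuration $w=x_{2}u/x_{1}$ sits strictly below the threshold $x_{2}^{d-1}x_{M}$ and is absorbed by Proposition \ref{d<=4} (so it does give $\depth=0$), whereas for $d=2$ the same configuration lands inside the $v$-interval of (i), reflecting that there $\mm$ never becomes associated. Pinning down the ordering of the reference monomials $x_{2}u/x_{1}$, $x_{2}^{d-1}x_{n}$, $x_{2}^{d-1}x_{M}$ and $x_{2}^{d}$ in each case, and thereby confirming that (i) is recorded against the correct end of the lexsegment so as to capture this $d=2$ effect, is the only truly case-sensitive part; everything else is a direct citation of the preceding results.
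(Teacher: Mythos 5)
Your second paragraph is, in substance, the paper's own proof: Theorem \ref{depthpower0} is stated in the paper as a summary of Section 1 (``taking into account all the above results''), and its justification is exactly the assembly of Propositions \ref{depthlex0}, \ref{compd=2}, \ref{d>=3}, \ref{d<=4} and the unnumbered proposition treating $w>_{lex}x_2u/x_1$ that you carry out. Your coverage check is right, and the boundary issue you flag at the end is genuine, not a formality: the assembly closes up only if in condition (i) the monomial $w$ is taken relative to $u$, as the theorem literally says, and not relative to $v$, which is the paper's running convention (introduction, Section 2). Under the $w<_{lex}v$ reading the theorem would actually be false in degree $2$: take $u=x_1x_M$, $v=x_2x_{M-1}$ with $M\geq 4$, so that $I=x_1(x_M,\ldots,x_n)+x_2(x_2,\ldots,x_{M-1})$ is a sum of ideals in disjoint sets of variables. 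Writing $a=\nu_1(m)$ and $b=\nu_M(m)+\cdots+\nu_n(m)$ for a monomial $m$, one checks that $x_1m\in I^k$ together with $m\notin I^k$ forces $a<b$, while $x_Mm\in I^k$ together with $m\notin I^k$ forces $b<a$; hence $\mm$ is never an associated prime of any power of $I$, in agreement with Proposition \ref{x2xm}, whose proof reduces exactly to this case $v=x_2^{d-1}x_{M-1}$. So the ``only truly case-sensitive part'' you identified is precisely where the content lies, and you resolved it the right way.

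The genuine gap is in your proof of the ``moreover'' clause. From $\depth(S/I^k)=0$ you obtain \emph{some} monomial $m$ with $I^k:(m)=\mm$, but the reduction ``I may take $\deg m=dk-1$'' is unjustified: an ideal generated in one degree need not have a socle witness in the minimal possible degree. Already for $(x_1^2,x_2^2)\subset\kk[x_1,x_2]$ (equigenerated, $k=1$) the only witness is $x_1x_2$ of degree $2$, since $I:(x_1)=(x_1,x_2^2)\neq\mm$. Your appeal to ``the explicit monomials produced above'' is circular: those degree-$(dk-1)$ witnesses exist exactly for the exponents $k$ covered by the propositions, and for such $k$ the conclusion $\depth(S/I^{k+j})=0$ is already contained in the propositions, each of which proves vanishing for \emph{all} exponents beyond its threshold ($2$, $3$, $d$ or $2d$). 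For a hypothetical $k$ below the threshold with $\depth(S/I^k)=0$ you have neither a minimal-degree witness nor any fallback, and no general principle can be invoked, because depth functions of powers of monomial ideals need not be monotone; the implication ``$\depth(S/I^k)=0\Rightarrow\depth(S/I^{k+1})=0$'' is false outside special classes. Your multiplication step itself is fine: if $\deg m=dk-1$, then for $u\in G(I)$ one has $\mm\subseteq I^{k+1}:(mu)$ and $mu\notin I^{k+1}$ for degree reasons, so $I^{k+1}:(mu)=\mm$; anchored at the witnesses the propositions actually construct, this gives a clean propagation mechanism that the paper does not spell out. To be fair, the paper offers no argument at all for the ``moreover'' beyond the same propositions, so it shares this deficiency; but as a proof of the literal statement, for an arbitrary $k$ with $\depth(S/I^k)=0$, your degree reduction is a real hole.
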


A particular class of lexsegment ideals for which we have a nice behavior for the depth of their powers is that of lexsegment ideals with a linear resolution.

\begin{Proposition} Let $I=(\mathcal{L}(u,v))$ be a lexsegment ideal with a linear resolution. Then $\depth(S/I^{k})=0$, for any $k\geq2$. 
\end{Proposition}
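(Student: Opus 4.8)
The plan is to combine the classification of lexsegment ideals with a linear resolution, due to Aramova--De Negri--Herzog \cite{ADH} and De Negri--Herzog \cite{DH}, with the depth computations already carried out in this section. After the normalization $x_1\mid u$, $x_1\nmid v$ that we have fixed throughout, a linear resolution forces the pair $(u,v)$ to be of a very restricted shape; the first step is to record these shapes explicitly and to organize the argument according to the value of $\nu_1(u)$. The point is that the conclusion asks for $\depth(S/I^k)=0$ for \emph{every} $k\geq2$, so I must land in one of the propositions that stabilizes already at $k=2$, rather than in one of the slowly stabilizing regimes.

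First I would dispose of the case $\nu_1(u)>1$. Here the inequality $x_nu\geq_{lex}x_1v$ holds automatically, as noted in the proof of Proposition~\ref{depthlex0}, so by \cite[Proposition~3.2]{EOS} we have $\depth(S/I)=0$. Proposition~\ref{depthlex0} then gives $\depth(S/I^k)=0$ for all $k\geq1$, in particular for all $k\geq2$. Thus it remains to treat ideals with a linear resolution and $\nu_1(u)=1$, that is, $u\leq_{lex}x_1x_2^{d-1}$.

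For $\nu_1(u)=1$ the strategy is to extract from the classification the fact that a linear resolution forces $v$ to be lexicographically small relative to $u$: concretely $v\leq_{lex}x_2u/x_1$ when $d\geq3$, and the appropriate degree-two analogue when $d=2$. Granting this, for $d\geq3$ the hypothesis of Proposition~\ref{d>=3} is met, giving $\depth(S/I^k)=0$ for all $k\geq2$; for $d=2$ the pair falls into case (a) (when $u\geq_{lex}x_1x_2$) or into case (d) (when $u\leq_{lex}x_1x_3$, forcing $v\leq_{lex}x_2x_n$) of Proposition~\ref{compd=2}, again yielding the conclusion for all $k\geq2$. In each instance the monomial $m$ of degree $dk-1$ certifying $\frak{m}\in\Ass(S/I^k)$ has already been exhibited in the corresponding proof, so no fresh computation is required.

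The main obstacle is precisely the comparison $v\leq_{lex}x_2u/x_1$: one must check that every pair $(u,v)$ appearing in the linear-resolution classification of \cite{ADH}, \cite{DH} satisfies it. Equivalently, one must rule out that $I$ lands in the slowly stabilizing regimes $w\geq_{lex}x_2u/x_1$ governed by Proposition~\ref{d<=4} and the proposition following it, where $\depth(S/I^k)=0$ is only guaranteed for $k\geq d$ or $k\geq2d$, and in particular one must show that the two exceptional configurations (i) and (ii) of Theorem~\ref{depthpower0} are incompatible with a linear resolution. This last point is also a necessary consistency check, since in those configurations no power of $I$ has depth zero; confirming that linearity of the resolution excludes them is what makes the present statement true, and reduces in the end to a finite lexicographic verification on the ends $u$ and $v$ of the lexsegment.
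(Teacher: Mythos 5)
Your overall strategy mirrors the paper's: dispose of $\nu_1(u)>1$ via Proposition~\ref{depthlex0}, and for $\nu_1(u)=1$ reduce to Proposition~\ref{d>=3} (resp.\ Proposition~\ref{compd=2} when $d=2$) through the comparison $v\leq_{lex}x_2u/x_1$. But the proposal stops exactly where the proof has to begin: you never derive that comparison from the linear-resolution classification. You say the shapes of $(u,v)$ should be ``recorded explicitly'' without recording them, you call the comparison ``the main obstacle,'' and you then declare it ``a finite lexicographic verification'' without performing it. That verification \emph{is} the mathematical content of the statement; as written, this is a plan with its central step missing, not a proof.

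The gap is fillable, and here is how. For a completely lexsegment ideal, under the standing normalization $x_1\mid u$, $x_1\nmid v$, case (a) of \cite[Theorem 1.3]{ADH} cannot occur (it would force $x_1\mid v$) and case (b) is exactly $\nu_1(u)\geq 2$, which you already handle; in the remaining case (c) one has $x_1w/x_{\max(w)}\leq_{lex}u$, hence $w\leq_{lex}x_{\max(w)}u/x_1<_{lex}x_2u/x_1$, because $w<_{lex}v\leq_{lex}x_2^d$ forces $\max(w)>2$; since $w$ is the largest monomial below $v$, this yields $v\leq_{lex}x_2u/x_1$. This short computation is precisely what the paper does for the completely lexsegment case. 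For a lexsegment ideal with linear resolution that is \emph{not} completely lexsegment, \cite[Theorem 2.4]{ADH} gives $u=x_1x_{l+1}^{a_{l+1}}\cdots x_n^{a_n}$ and $v=x_lx_n^{d-1}$ for some $2\leq l\leq n-1$, and then $v\leq_{lex}x_2u/x_1$ is immediate (for $l\geq3$ compare the leading variables $x_l$ and $x_2$; for $l=2$ use that $x_n^{d-1}$ is lex-minimal in degree $d-1$); the same shapes settle your $d=2$ claim that $v\leq_{lex}x_2x_n$ when $u\leq_{lex}x_1x_3$. Interestingly, on this non-complete branch your route diverges from the paper's and is leaner: the paper does not check the inequality but instead exhibits the monomial $m=x_l^du/x_1$ with $\frak{m}=I^2:(m)$, and then passes from $k=2$ to all $k\geq2$ by citing \cite{EO} (powers have linear quotients) and \cite[Proposition 2.1]{HH1} (depth of powers is non-increasing), two external results your reduction to Proposition~\ref{d>=3} avoids entirely. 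So the architecture is sound, but until the verification above is actually carried out, the proof is incomplete.
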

\begin{proof} Firstly, we consider that $I$ is a completely lexsegment ideal with a linear resolution. Using \cite[Theorem 1.3]{ADH}, we must have one of the following cases:

\begin{itemize}
	\item [(a)] $u=x_1^ax_2^{d-a},\ v=x_1^ax_n^{d-a}$ for some $a,\ 0< a\leq d;$
	\item [(b)] $b_1< a_1-1;$
	\item [(c)] $b_1 = a_1-1$ and $x_1 w/x_{\max(w)}\leq_{lex} u$, where $w$ is the largest monomial of degree $d$ such that $w <_{lex} v$. 
\end{itemize}
The case $d=2$ is obvious. Therefore, we assume $d\geq3$.
One may note that in cases (a) and (b) we have $\depth(S/I)=0$ according to \cite[Proposition 3.2]{EOS}. Therefore, by Proposition~\ref{depthlex0}, we have $\depth(S/I^{k})=0$, for any $k>1$.

For the case (c), we note that $w\leq_{lex}x_{\max(w)}u/x_1<_{lex}x_2u/x_1$ since $w<_{lex}{x_2^d}$ implies $\max(w)>2$. Therefore, $v\leq_{lex}x_2u/x_1$ and, by Proposition~\ref{d>=3}, $\depth(S/I^k)=0$, for any $k\geq2$.

Now, let us assume that $I$ is a lexsegment ideal with a linear resolution which is not a completely lexsegment ideal. We note that it is enough to prove that $\depth(S/I^2)=0$. Indeed, according to \cite{EO}, any power of a lexsegment ideal with a linear resolution has linear quotients. Therefore, by \cite[Proposition 2.1]{HH1}, the depth is a non-increasing function which implies $\depth(S/I^k)=0$, for any $k\geq2$.

In order to do this, we prove that $\frak{m}=(x_1,\ldots,x_n)\in\Ass(S/I^2)$. Since $I$ is a lexsegment ideal with a linear resolution which is not a completely lexsegment, according to \cite[Theorem 2.4]{ADH}, the monomials $u$ and $v$ must be of the form $u=x_1x_{l+1}^{a_{l+1}}\cdots x_n^{a_n},\ v=x_lx_n^{d-1}$ for some $2\leq l\leq n-1.$ Let $m=x_l^{d}u/x_1\notin I^2$. Then $x_1m=ux_l^{d}$ and $x_im=(x_l^{d-1}x_i)(x_lu/x_1)$ are in $I^2$, for all $2\leq i\leq n$. Therefore $\frak{m}\subseteq I^2:(m)$. Since the other inclusion is trivial, we get $\depth(S/I^2)=0$, which ends the proof.
\end{proof}

\section{Normally torsion-free lexsegment ideals}

We characterize all the lexsegment ideals which are normally torsion-free. The following result shows that we can reduce our study to those lexsegment ideals which have $x_1\mid u$ and $x_1\nmid v$.
\begin{Lemma} Let $u,v$ be two monomials of degree $d$ such that $x_1\mid u$ and $x_1\mid v$ and $I=(\mathcal{L}(u,v))$. Let $b=\nu_1(v)$ and we consider monomials $u'=u/x_1^b$ and $v'=v/x_1^b$. Let $I'=(\mathcal{L}(u',v'))$. Then $\Ass(S/I)=\{(x_1)\}\cup\Ass(S/I')$.
\end{Lemma}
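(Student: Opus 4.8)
The plan is to reduce the entire statement to the single algebraic identity $I = x_1^b I'$, and then read off the associated primes from a colon computation. For the reduction, I would first observe that every generator of $I$ is divisible by $x_1^b$: if some $m \in \mathcal{L}(u,v)$ had $\nu_1(m) < b = \nu_1(v)$, then $m$ and $v$ would already differ at the first variable with $\nu_1(m) < \nu_1(v)$, forcing $m <_{lex} v$, a contradiction. Since $u \geq_{lex} v$ also gives $\nu_1(u) \geq \nu_1(v) = b$, the assignment $m \mapsto m/x_1^b$ is an order-preserving bijection between the degree-$d$ monomials divisible by $x_1^b$ and all degree-$(d-b)$ monomials, and it sends $u,v$ to $u',v'$. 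Hence it carries $\mathcal{L}(u,v)$ bijectively onto $\mathcal{L}(u',v')$, which yields $I = x_1^b I'$. In particular every monomial belonging to $I$ has $x_1$-degree at least $b$.

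Next I would show $(x_1) \in \Ass(S/I)$ by exhibiting a witness. Take $m = x_1^{b-1} v'$. Then $x_1 m = x_1^b v' = v \in I$, while for any monomial $\mu$ with $x_1 \nmid \mu$ one has $\nu_1(\mu m) = b-1 < b$, so $\mu m \notin I$. This simultaneously shows $m \notin I$ and $I : m \subseteq (x_1)$; combined with $x_1 \in I : m$ it gives $I : m = (x_1)$, so $(x_1)$ is associated.

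Finally I would compute $I : m$ for an arbitrary monomial $m$ and split on $c := \nu_1(m)$ (recall that the associated primes of a monomial ideal are monomial primes $(x_i : i \in \sigma)$). Using $I = x_1^b I'$, a monomial $\mu$ lies in $I : m$ iff $x_1^b \mid \mu m$ and $(\mu m)/x_1^b \in I'$. If $c \geq b$, the divisibility is automatic and one obtains the clean identity $I : m = I' : (m/x_1^b)$, together with $m \notin I \iff m/x_1^b \notin I'$; running this in both directions via $m = x_1^b m''$ identifies $\{\, I : m : \nu_1(m)\geq b,\ m\notin I\,\}$ with $\{\, I' : m'' : m''\notin I'\,\}$, so the primes produced by such $m$ are exactly $\Ass(S/I')$. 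If $c < b$, then every $\mu \in I : m$ must supply the missing copies of $x_1$, so $I : m \subseteq (x_1)$; since $(0) \notin \Ass(S/I)$ for the nonzero proper ideal $I$, a nonzero monomial prime contained in $(x_1)$ is forced to equal $(x_1)$. Assembling the two cases gives $\Ass(S/I) \subseteq \{(x_1)\} \cup \Ass(S/I')$, and the reverse inclusion follows from the witness of the second paragraph together with the $c \geq b$ correspondence. The computation is routine; the only point I would treat as the crux is the bookkeeping in the $c \geq b$ case, making sure the correspondence $m \leftrightarrow m/x_1^b$ neither drops nor invents an associated prime, and checking that the leftover range $c<b$ can contribute nothing beyond $(x_1)$.
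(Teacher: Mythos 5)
Your proposal is correct and rests on the same mechanism as the paper's proof: the observation that every monomial of $\mathcal{L}(u,v)$ is divisible by $x_1^b$ and that dividing by $x_1^b$ carries $\mathcal{L}(u,v)$ bijectively onto $\mathcal{L}(u',v')$ (your identity $I=x_1^b I'$), so that colon ideals correspond via $m\leftrightarrow m/x_1^b$ exactly as in the paper's case analysis. The only differences are organizational — you split on $\nu_1(m)$ where the paper splits on whether $x_1\in\frak{p}$, and you explicitly verify the inclusion $\{(x_1)\}\cup\Ass(S/I')\subseteq\Ass(S/I)$ (with the witness $v/x_1$) that the paper dismisses as clear.
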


\begin{proof}It is clear that $\{(x_1)\}\cup\Ass(S/I')\subseteq\Ass(S/I)$. Let us consider the converse inclusion. Let $\frak{p}\in\Ass(S/I)$. If $x_1\notin \frak{p}$, then we must have $\frak{p}\in\Ass(S/I')$. Therefore let $x_1\in\frak{p}$. If $\frak{p}=(x_1)$, then the statement is clear. We assume that there is some $i\geq2$ such that $x_i\in\frak{p}$. Since $\frak{p}\in\Ass(S/I)$, there exists a monomial $m\notin I$ such that $\frak{p}=I:(m)$. In particular, $x_im\in I$ and $x_1^b\mid m$. Let $m'=m/x_1^b$ and we prove that $\frak{p}=I':(m')$. Indeed, for any $x_j\in\frak{p}$ we have $x_jm\in I$. Thus, for any $x_j\in\frak{p}$, there exists $\alpha\in \mathcal{L}(u,v)$ such that $\alpha\mid x_jm$. This implies that $\alpha/x_1^b$ divides $x_jm'$ and $\alpha/x_1^b\in\mathcal{L}(u',v')$, thus $x_jm'\in I'$ and $\frak{p}\subseteq I':(m')$. For the converse inclusion, let $\beta \in I':(m')$. We have to prove that $\beta\in \frak{p}$. We have $\beta m'\in I'$, that is there exists a monomial $\alpha'\in\mathcal{L}(u',v')$ such that $\alpha'\mid\beta m'$. This implies $x_1^b\alpha'\mid\beta m$ and, since $x_1^b\alpha\in\mathcal{L}(u,v)$, $\beta\in I:(m)=\frak{p}$. 
\end{proof}

Henceforth, we will assume that $x_1\mid u$ and $x_1\nmid v$.
We will firstly consider the case when $\depth(S/(\mathcal{L}(u,v)))=0$. For this class of ideals, the set of associated prime ideals is known.

\begin{Proposition}[\cite{I}]\label{Ishaq} Let $I=(\mathcal{L}(u,v))$ be a lexsegment ideal with $\depth(S/I)=0$ which is not an initial ideal. Then $$\Ass(S/I)=\{(x_2,\ldots,x_n)\}\cup\{(x_1,\ldots,x_j):j\in\supp(v)\cup\{n\}\}.$$
\end{Proposition}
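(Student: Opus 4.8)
The plan is to exploit that, since $I$ is a monomial ideal, every element of $\Ass(S/I)$ is a monomial prime $P_F=(x_i:i\in F)$, and that $P_F\in\Ass(S/I)$ iff there is a monomial $m\notin I$ with $I:(m)=P_F$. The engine of the whole argument is a single exchange observation that I would record first: if $m\notin I$ and $x_im\in I$, then any generator $\alpha\in\mathcal{L}(u,v)$ dividing $x_im$ must satisfy $x_i\mid\alpha$ and $\alpha/x_i\mid m$ (otherwise $\alpha\mid m$ and $m\in I$); hence, replacing $x_i$ by a variable $x_j$ of smaller index $j<i$ gives $\beta=(\alpha/x_i)x_j\geq_{lex}\alpha\geq_{lex}v$, so $x_jm\in I$ as soon as $\beta\leq_{lex}u$, while replacing by a larger index stays below $u$ but may fall below $v$. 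Thus lowering an index can only fail through the upper bound $u$ and raising one can only fail through the lower bound $v$. I would also use the translation $\depth(S/I)=0\iff\frak{m}\in\Ass(S/I)\iff x_nu\geq_{lex}x_1v$ from \cite[Proposition 3.2]{EOS}.

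For the membership of $(x_1,\ldots,x_j)$ with $j\in\supp(v)$, I would test $m=v/x_j$, which has degree $d-1$ and so lies outside $I$. For $i\leq j$ one has $x_im=x_iv/x_j\geq_{lex}v$, and $x_im\leq_{lex}u$ follows from $x_1v\leq_{lex}x_nu\leq_{lex}x_ju$ (the depth-zero inequality together with $j\leq n$), whence $x_im\in I$; for $i>j$ the monomial $x_iv/x_j<_{lex}v$ has degree $d$ and therefore is not in $I$. A short check shows that a monomial supported on $\{x_{j+1},\ldots,x_n\}$ times $m$ has every degree-$d$ divisor $\leq_{lex}(v/x_j)x_{j+1}<_{lex}v$, hence lies outside $I$; this gives $I:(m)=(x_1,\ldots,x_j)$. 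The prime $\frak{m}$ (the case $j=n$) is associated directly because $\depth(S/I)=0$.

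For the exceptional prime $(x_2,\ldots,x_n)$ I would produce a monomial $m$ with $x_im\in I$ for every $i\geq2$ but $x_1m\notin I$. Since $I$ is not initial we have $u<_{lex}x_1^d$, so one can choose $m$ (for instance a power of $x_1$ when $x_2\nmid u$, or $u/x_2$ when $x_2\mid u$) so that multiplication by the lex-largest variable $x_1$ overshoots $u$, while each $x_im$ with $i\geq2$ remains divisible by a generator; verifying $I:(m)=(x_2,\ldots,x_n)$ then completes the membership half.

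For the converse, let $P_F=I:(m)\in\Ass(S/I)$ and set $F=\{i:x_im\in I\}$. Writing $F=\bigcup_{p\mid m,\ \deg p=d-1}[\ell_p,r_p]$, where $[\ell_p,r_p]=\{i:v\leq_{lex}px_i\leq_{lex}u\}$, exhibits $F$ as a union of index-intervals, and the exchange principle forces the dichotomy: if $1\in F$ a suitable divisor $p$ with $px_1\leq_{lex}u$ yields an initial segment $\{1,\ldots,j\}\subseteq F$, and one shows $F$ equals such a segment; if $1\notin F$, the requirement $x_1m\notin I$ is so restrictive that, once any $x_i$ with $i\geq2$ lies in $F$, all of them do, giving $F=\{2,\ldots,n\}$. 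It then remains to pin down the admissible $j$ by analysing the boundary, where $x_jm\in I$ but $x_{j+1}m\notin I$: this forces a generator ending exactly at $x_j$, i.e.\ $\nu_j(v)\geq1$, so $j\in\supp(v)$ unless $j=n$. I expect this last step—ruling out gaps in the union of intervals and matching the surviving initial segments precisely to $\supp(v)\cup\{n\}$—to be the main obstacle; everything else reduces to the exchange lemma and careful bookkeeping at the two ends $u$ and $v$ of the lexsegment.
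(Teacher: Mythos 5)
The paper never proves this proposition at all --- it is imported verbatim from Ishaq's preprint \cite{I} --- so your attempt can only be judged on its own merits, and there it has real gaps. The membership half is mostly sound: the witness $m=v/x_j$ for $(x_1,\ldots,x_j)$, $j\in\supp(v)$, works exactly as you say (your intermediate bound, that every degree-$d$ divisor of $m'm$ is $\leq_{lex}(v/x_j)x_{j+1}$, is false --- take $v=x_2x_5x_7$, $j=5$, $m'=x_6^2$, divisor $x_2x_6^2$ --- but the needed conclusion, that every such divisor is $<_{lex}v$, does hold, since such a divisor keeps all variables of $v$ of index $<j$ and has strictly fewer copies of $x_j$). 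Your witnesses for $(x_2,\ldots,x_n)$, however, fail in one case: a power of $x_1$ works only when $u=x_1^{d-1}x_2$. For instance, for $u=x_1x_3^2$, $v=x_3^2x_n$ (depth zero, not initial, $x_2\nmid u$) no generator has support in $\{1,2\}$ with $\nu_2\leq1$, so $x_2\notin I:(x_1^N)$ for every $N$. The clean route, and the one the paper itself uses in the proof of Proposition~\ref{depth0} (citing \cite[Proposition 3.1]{EOS}), is that $(x_2,\ldots,x_n)$ is a \emph{minimal} prime of $I$: every generator is $<_{lex}x_1^d$, and for each $i\geq2$ there is a generator supported in $\{1,i\}$, namely $x_i^d$ when $i\leq\min(v)$ and $x_1x_i^{d-1}$ when $i>\min(v)$, the latter lying below $u$ because depth zero forces $\min(v)\geq\min(u/x_1)$ when $\nu_1(u)=1$; minimal primes are always associated.

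The serious gap is the reverse inclusion, which you yourself flag as ``the main obstacle'': the sketch does not prove it, and the one concrete mechanism you propose is invalid. From the boundary data --- some $p\mid m$, $\deg p=d-1$, with $v\leq_{lex}px_j\leq_{lex}u$ while $px_i<_{lex}v$ for all $i>j$ --- one cannot conclude $\nu_j(v)\geq1$: take $d=3$, $v=x_2x_{j+1}^2$, $p=x_2x_n$; then $px_j=x_2x_jx_n>_{lex}v$, yet $px_i<_{lex}v$ for every $i>j$ and $x_j\nmid v$. So no ``generator ending exactly at $x_j$'' need exist, and $j\in\supp(v)$ cannot be read off a single divisor; the argument must be global in $m$. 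The shape of a correct argument is the one the paper runs (for powers) in the ``$\supseteq$'' direction of Proposition~\ref{depth0}, which specializes to $k=1$: with $x_1\in\frak{p}$, $j=\max\{i:x_i\in\frak{p}\}$, $j\notin\supp(v)$, $j\neq n$, pick a generator $\alpha$ with $x_j\mid\alpha$, $\alpha/x_j\mid m$; show $x_1\nmid\alpha$ (else $x_n\alpha/x_j\in\mathcal{L}(u,v)$ forces $x_n\in\frak{p}$); then $x_n\alpha/x_j<_{lex}v$ produces an index $s$ which, using $x_j\nmid v$, satisfies $s>j$, and a suitable power $x_s^{b+1}$ multiplies $m$ into $I$, so $x_s\in\frak{p}$, contradicting maximality of $j$. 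This also handles the ``no gaps'' claim ($x_i\in\frak{p}$ for all $1<i<j$), which you likewise assert without proof. Finally, the case $1\notin F$ giving $F=\{2,\ldots,n\}$ again needs depth zero ($x_i^d\in I$ for $i\leq\min(v)$, and $x_1x_i^{d-1}\in I$ together with primality for the remaining $i$), not just the observation that $x_1m\notin I$ is ``restrictive''. As it stands, the proposal establishes only one of the two inclusions.
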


\begin{Proposition}\label{depth0} Let $I=(\mathcal{L}(u,v))$ be a lexsegment ideal with $\depth(S/I)=0$. Then $I$ is normally torsion-free.
\end{Proposition}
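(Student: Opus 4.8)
The goal is to establish $\Ass(S/I^{k})=\Ass(S/I)$ for every $k\ge 1$, which is the definition of $I$ being normally torsion-free. Two preliminary reductions are immediate. First, $\Min(S/I)\subseteq\Ass(S/I^{k})$ for all $k$, so every minimal prime of $I$ persists. Second, by Proposition~\ref{depthlex0} the hypothesis $\depth(S/I)=0$ forces $\depth(S/I^{k})=0$ for all $k$, hence $\mathfrak{m}=(x_1,\dots,x_n)\in\Ass(S/I^{k})$ for all $k$; note that $\mathfrak{m}$ is precisely the end $(x_1,\dots,x_j)$ with $j=n$ in Ishaq's list. Thus it remains to control the \emph{embedded} primes. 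I would first assume $I$ is not an initial lexsegment ideal, so that Proposition~\ref{Ishaq} supplies the explicit list $\Ass(S/I)=\{(x_2,\dots,x_n)\}\cup\{(x_1,\dots,x_j):j\in\supp(v)\cup\{n\}\}$, and treat the initial case at the end.

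For the inclusion $\Ass(S/I)\subseteq\Ass(S/I^{k})$ I would proceed in the spirit of Section~1: for each prime $\mathfrak{p}=(x_2,\dots,x_n)$ and $\mathfrak{p}=(x_1,\dots,x_j)$ in the list, exhibit a monomial $m$ with $I^{k}:(m)=\mathfrak{p}$. Concretely, starting from a $k=1$ witness $m_0$ with $I:(m_0)=\mathfrak{p}$ (which exists since $\mathfrak{p}\in\Ass(S/I)$), I would set $m=m_0\,g^{k-1}$ for a carefully chosen generator $g\in G(I)$ and check that $x_i m\in I^{k}$ precisely for the indices $i$ with $x_i\in\mathfrak{p}$, while no power $x_i^{s}m$ lies in $I^{k}$ for the remaining $i$; the interval shape of $\mathcal{L}(u,v)$ makes these divisibility checks routine, paralleling the computations in Propositions~\ref{compd=2}--\ref{d<=4}.

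The main obstacle is the reverse inclusion $\Ass(S/I^{k})\subseteq\Ass(S/I)$, i.e.\ showing that no new embedded prime appears. Here I would pass to the localization characterization: for a monomial prime $P_T=(x_i:i\in T)$ one has $P_T\in\Ass(S/I^{k})$ if and only if $\depth\bigl(S_T/(I_T)^{k}\bigr)=0$, where $S_T=\kk[x_i:i\in T]$ and $I_T$ is the dehomogenization of $I$ obtained by setting $x_j=1$ for $j\notin T$; this uses $(I^{k})_T=(I_T)^{k}$ and the fact that an associated monomial prime is detected after inverting the variables outside $T$. The task then becomes: for every $T$ that is neither $\{2,\dots,n\}$ nor an initial segment $\{1,\dots,j\}$ with $j\in\supp(v)\cup\{n\}$, show that $(I_T)^{k}$ has positive depth for all $k$. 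The delicate point is that $I_T$ is in general \emph{not} a lexsegment ideal, so Proposition~\ref{depthlex0} does not apply verbatim; one must read off the depth behaviour of $(I_T)^{k}$ from the interval structure of $\mathcal{L}(u,v)$ and the position of the excluded indices. I expect ruling out every such support $T$ to be the genuinely hard step, requiring a case distinction according to whether $1\in T$, whether $T$ is an interval, and—for initial segments—whether the top index of $T$ lies in $\supp(v)\cup\{n\}$.

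Finally, when $I$ is an initial lexsegment ideal (so $u=x_1^{d}$, which still has $\depth(S/I)=0$ since $\nu_1(u)=d>1$), Proposition~\ref{Ishaq} does not apply, and I would compute $\Ass(S/I)$ together with its persistence directly from the generating set, again via explicit witnesses and the same localization bound; this case is more degenerate and should follow from the same mechanism. Combining the two inclusions in all cases yields $\Ass(S/I^{k})=\Ass(S/I)$ for every $k$, that is, $I$ is normally torsion-free.
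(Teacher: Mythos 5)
Your overall frame (proving both inclusions, using $\Min(S/I)\subseteq\Ass(S/I^k)$, Proposition~\ref{depthlex0} for $\frak{m}$, and Ishaq's list from Proposition~\ref{Ishaq}) matches the paper, but the proposal has a genuine gap at exactly the point you yourself flag as ``the genuinely hard step'': the inclusion $\Ass(S/I^k)\subseteq\Ass(S/I)$. Passing to monomial localization ($P_T\in\Ass(S/I^k)$ iff $\depth(S_T/(I_T)^k)=0$) is a legitimate reformulation, but it does no work by itself: as you note, $I_T$ is in general no longer a lexsegment ideal, so none of the depth results of Section~1 apply to it, and you give no argument whatsoever for why $\depth(S_T/(I_T)^k)>0$ for every inadmissible support $T$. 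The paper never localizes; it argues directly on a witness $m$ with $\frak{p}=I^k:(m)$. Setting $j=\max\{i:x_i\in\frak{p}\}$ and writing $x_jm=\alpha_1\cdots\alpha_k\beta$ with $\alpha_t\in\mathcal{L}(u,v)$ and $x_j\mid\alpha_t$, it shows $x_1\nmid\alpha_t$ (otherwise $x_n\alpha_t/x_j\in\mathcal{L}(u,v)$ forces $x_n\in\frak{p}$, i.e.\ $\frak{p}=\frak{m}$), deduces $x_i\alpha_t/x_j>_{lex}v$ and hence $x_i\in\frak{p}$ for all $1<i<j$, and then rules out $j\notin\supp(v)$ by a lex comparison between $\alpha_t$ and $v$ that produces $x_s\in\frak{p}$ for some $s>j$, contradicting the maximality of $j$; a separate short argument handles the case $x_1\notin\frak{p}$. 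Without this argument, or a genuine substitute for it, your proof is incomplete at its core.

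A secondary flaw sits in your forward inclusion: the verification criterion you state is too weak. To conclude $I^k:(m)=\frak{p}$ for $\frak{p}=(x_1,\ldots,x_j)$ with $j<n$, it does not suffice that $x_im\in I^k$ for $i\leq j$ and that no pure power $x_i^sm$ lies in $I^k$ for $i>j$: the colon ideal could still contain a mixed monomial such as $x_{j+1}x_n$ supported entirely outside $\frak{p}$, and then $I^k:(m)$ would properly contain $\frak{p}$. One must show that no monomial $m'$ with $\supp(m')\subseteq\{j+1,\ldots,n\}$ satisfies $m'm\in I^k$; the paper does this by taking a generator $\omega\in G(I^k)$ with $\omega/\gcd(\omega,m)\mid m'$ and deriving a contradiction from the lex inequality $\omega\geq_{lex}(x_1x_n^{d-1})^{a}v^{k-a}$, where $a=\nu_1(\omega)$. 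Your witness $m=m_0g^{k-1}$ does have the right shape---the paper takes precisely $m_0=v/x_j$ and $g=x_1x_n^{d-1}$---but the containment $I^k:(m)\subseteq\frak{p}$ is not a ``routine divisibility check'' and needs this extra argument.
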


\begin{proof} Using \cite[Proposition 3.2]{EOS}, $\depth(S/I)=0$ is equivalent to the fact that $x_nu\geq_{lex}x_1v$. 
If $I=(\mathcal{L}^i(v))$ is an initial ideal such that $x_1\nmid v$, then $\Ass(S/I)=\{(x_1,\ldots,x_j):j\in\supp(v)\cup\{n\}\}$, \cite{HT}.

We have to show that $\Ass(S/I)=\Ass(S/I^k)$, for any $k\geq 2$.

``$\subseteq$" Let $k\geq 2$ and $\frak{p}\in\Ass(S/I)$. If $x_1\notin \frak{p}$, then $I$ is not an initial lexsegment ideal and $\frak{p}=(x_2,\ldots,x_n)$, according to Proposition~\ref{Ishaq}. By the proof of \cite[Proposition 3.1]{EOS}, $(x_2,\ldots,x_n)\in\Min(S/I)\subseteq\Ass(S/I^k)$. Let $x_1\in\frak{p}$. By Proposition~\ref{depthlex0}, the ideal $(x_1,\ldots,x_n)\in\Ass(S/I^k)$.

Let us assume now that $\frak{p}=(x_1,\ldots,x_j)$, with $j\in\supp(v)$. We may assume that $j<n$. Let $m=(x_1x_n^{d-1})^{k-1}v/x_j\notin I^k$, since $\deg(m)=kd-1$. Then $\frak{p}=I^k:(m)$. Indeed, $x_1m=(x_1v/x_j)(x_1x_n^{d-1})^{k-1}\in I^k$, since $x_1v\leq_{lex}x_nu<_{lex}x_ju$. If $2\leq i\leq j$, we get $x_im=(x_iv/x_j)(x_1x_n^{d-1})^{k-1}\in I^k$ since $x_2^d\geq_{lex} x_iv/x_j\geq_{lex}v$. Therefore $\frak{p}\subseteq I^k:(m)$. For the other inclusion, we assume by contradiction that there exists a monomial $m'\in I^k:(m)$ such that $m'\notin\frak p$, that is $\supp(m')\subseteq\{j+1,\ldots,n\}$. One may easy note that $m'\notin I^k$. Since $m'\in I^k:(m)$, there exists $\omega\in G\left(I^k\right)$ such that $\omega/\gcd(\omega,m)\mid m'$, thus $\supp(\omega/\gcd(\omega,m))\subseteq\{j+1,\ldots,n\}$ and $\nu_j(\omega/\gcd(\omega,m))\leq\nu_j(m)=\nu_j(v)-1$. Let $a=\nu_1(\omega)$. We obviously have $a\leq k-1$. One may note that $\omega\geq_{lex}(x_1x_n^{d-1})^{a}v^{k-a}=T$, therefore there exists $s$ such that, for any $i<s$, $\nu_i(\omega)=\nu_i(T)$, and $\nu_s(\omega)>\nu_s(T)$. Since $\supp(\omega/\gcd(\omega,m))\subseteq\{j+1,\ldots,n\}$, we must have $\supp(\omega/\gcd(\omega,T))\subseteq\{j+1,\ldots,n\}$ and $s\geq j+1$. Then $\nu_j(\omega)=\nu_j(v^{k-a})=(k-a)\nu_j(v)$, contradiction with $\nu_j(\omega)<\nu_j(v)-1$. Thus $\frak p=I^k:(m)$.

``$\supseteq$" Let us fix $k\geq2$. Using Proposition \ref{depthlex0}, $\frak{m}\in\Ass(S/I^k)$ and $\frak{m}\in\Ass(S/I)$.

Let $\frak{p}\in\Ass(S/I^k)$ and assume that $x_1\in \frak{p}$. Let $j:=\max\{i:x_i\in\frak{p}\}$. Since $\frak{p}\in\Ass(S/I^k)$, there exists a monomial $m\notin I^k$ such that $\frak{p}=I^k:(m)$.

Firstly, we prove that, for any $1<i<j$, $x_i\in\frak{p}$. Indeed, let $1<i<j$ be an integer. Since $x_jm\in I^k$, there exist $\alpha_1,\ldots,\alpha_{k}\in \mathcal{L}(u,v)$ and $\beta$ a monomial in $S$ such that $x_jm=\alpha_1\cdots\alpha_k\beta$. Since $m\notin I^k$, we must have $x_j\nmid\beta$. Let $1\leq t\leq k$ be such that $x_j\mid\alpha_t$. Therefore $m=\alpha_1\cdots(\alpha_t/x_j)\cdots\alpha_k\beta.$ One may note that $x_1\nmid\alpha_t$ since, otherwise, $x_n\alpha_t/x_j\in\mathcal{L}(u,v)$, which implies $x_nm\in I^k$ and $x_n\in\frak{p}$, contradiction. Thus $x_i\alpha_t/x_j>_{lex}v$ and $x_im\in I^k$, that is $x_i\in I^k:(m)=\frak{p}$.

Next we claim that $j\in\supp(v)$. Let us assume by contradiction that $j\notin\supp(v)$. We assumed that $\frak{p}\neq\frak{m}$ and this implies $j\neq n$. Since $x_jm\in I^k$, there exist monomials $\alpha_1,\ldots,\alpha_{k}\in \mathcal{L}(u,v)$ and $\beta$ a monomial in $S$ such that $x_jm=\alpha_1\cdots\alpha_k\beta$. Let $1\leq t\leq k$ be such that $x_j\mid\alpha_t$. Therefore $m=\alpha_1\cdots(\alpha_t/x_j)\cdots\alpha_k\beta.$ As before, we cannot have $x_1\mid \alpha_t$ since, in this case, we obtain also that $x_sm\in I^k$, for all $s>j$, that is $I^k:(m)=\frak{m}$. We consider the monomial $x_nm=\alpha_1\cdots(x_n\alpha_t/x_j)\cdots\alpha_k\beta.$ If $x_n\alpha_t/x_j\geq_{lex}v$, we get that $x_n\in \frak p$, that is $\frak p=\frak{m}$, contradiction. Therefore, we assume that $x_n\alpha_t/x_j<_{lex} v$, that is there exists $s\geq2$ such that, for any $2\leq i<s$, $\nu_i(v)=\nu_i(x_n\alpha_t/x_j)$ and $\nu_s(v)>\nu_s(x_n\alpha_t/x_j)$. On the other hand, $\alpha_t\geq_{lex}v$ implies that there exists $s'\geq2$ such that, for any $2\leq i<s'$, $\nu_i(v)=\nu_i(\alpha_t)$ and $\nu_{s'}(v)<\nu_{s'}(\alpha_t)$. Since $\nu_s(v)>\nu_s(x_n\alpha_t/x_j)$, $\nu_{s'}(v)<\nu_{s'}(\alpha_t)$, and $x_j\nmid v$ we get that $\nu_j(\alpha_t)=1$ and $s>j$. Let $b=\nu_s(v)$. Then $x_s^{b+1}m=\alpha_1\cdots(x_s^{b+1}\alpha_t/x_j)\cdots\alpha_k\beta$ and we obviously have $x_s^{b+1}\alpha_t/x_j\in I$, that is $x_s^{b+1}\in\frak{p}$ and $x_s\in\frak{p}$, contradiction. Therefore, if $x_1\in \frak{p}$, we must have $\frak{p}\in\{(x_1,\ldots,x_j):j\in\supp(v)\cup\{n\}\}$.

Let us assume that $x_1\notin\frak{p}$ and we prove that $\frak{p}=(x_2,\ldots,x_n)$. Indeed, let $2\leq i\leq\min(u/x_1)$. Taking into account that $x_nu\geq_{lex}x_1v$, we have $i\leq\min(v)$ and $x_i\in\frak{p}$ since $\frak{p}\supset I^k$ and $(x_i^{d})^{k}\in I^k$. If $i>\min(u/x_1)$, then $(x_1x_i^{(d-1)})^{k}\in I^k$, and, since $x_1\nmid \frak{p}$ and $\frak{p}\supseteq I^k$, we must have $x_i\in\frak{p}$. Therefore $\frak{p}=(x_2,\ldots,x_n)\in\Ass(S/I)$.  
\end{proof}

According to the results from the first section, we have two remaining cases to study: (i) $d=2$, $u=x_1x_3$ and $v=x_2^2$ and (ii) $x_2^{d-1}x_M<_{lex}v\leq x_2^d$ and $w>_{lex}x_2u/x_1$, where $w$ is the greatest monomial of degree $d$ such that $w<_{lex}v$ and $M=\min(u/x_1)$.

Firstly, we consider the case when $u\leq_{lex}x_1x_3$ and $v=x_2^2$. The following result describes the behavior of the set of associated prime ideals when passing to polynomial rings with a smaller number of variables.

\begin{Lemma}[\cite{TM}]\label{subring} Let $I$ be a monomial ideal in $S$ and let $x_i$ be a variable such that $x_i\nmid m$ for any $m\in G(I)$. Then there is a one-to-one correspondence between the sets $\Ass_S(S/I)$ and $\Ass_S(S/(I, x_i))$ given by $\frak{p}\in\Ass_S(S/I)$ if and only if
$(\frak{p},x_i)\in\Ass_S(S/(I,x_i))$.
\end{Lemma}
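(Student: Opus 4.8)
The plan is to argue directly with monomials, using the characterization recalled in the preliminaries that, for a monomial ideal $J$, a prime $\frak p$ lies in $\Ass_S(S/J)$ precisely when $\frak p=J:(m)$ for some monomial $m$, and that such an associated prime is a monomial prime, say $\frak p=(x_j:j\in A)$. The first step, which underlies everything, is the observation that no associated prime of $I$ can contain $x_i$. Indeed, if $\frak p=I:(m)$ were a proper ideal (so $m\notin I$) containing $x_i$, then $x_im\in I$ would be divisible by some $g\in G(I)$; since $x_i\nmid g$ by hypothesis, $g\mid m$ and $m\in I$, a contradiction. The very same computation shows that deleting the $x_i$-part of a witness does not change the colon: if $m_0=m/x_i^{\nu_i(m)}$, then $I:(m)=I:(m_0)$, because divisibility by a generator of $I$ is insensitive to the power of $x_i$. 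Hence every witness may be taken coprime to $x_i$.

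Next I would prove the implication $\frak p\in\Ass_S(S/I)\Rightarrow(\frak p,x_i)\in\Ass_S(S/(I,x_i))$. Writing $\frak p=I:(m)$ with $x_i\nmid m$ by the reduction above, the claim is that $(I,x_i):(m)=(\frak p,x_i)$. This is a routine monomial membership check: a monomial $n$ lies in $(I,x_i):(m)$ iff $nm\in(I,x_i)$, that is, iff $nm\in I$ or $x_i\mid nm$; since $x_i\nmid m$, the second alternative forces $x_i\mid n$, while the first says $n\in I:(m)=\frak p$. Thus $(I,x_i):(m)=(\frak p,x_i)$, so $(\frak p,x_i)$ is associated to $(I,x_i)$.

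For the converse, suppose $\frak q=(I,x_i):(m')$ for some monomial $m'$. Since this colon is proper, $m'\notin(I,x_i)$, so $x_i\nmid m'$; moreover $x_i\in\frak q$ automatically because $x_im'\in(x_i)\subseteq(I,x_i)$. Therefore $\frak q=(\frak p,x_i)$ with $\frak p=(x_j:j\in A)$ and $i\notin A$, and it remains to show $\frak p=I:(m')$, which places $\frak p$ in $\Ass_S(S/I)$. The inclusion $\frak p\subseteq I:(m')$ is immediate: for $j\in A$ we have $x_jm'\in(I,x_i)$ and $x_i\nmid x_jm'$, whence $x_jm'\in I$. The reverse inclusion is the only delicate point, and I expect it to be the main obstacle: one must rule out that $I:(m')$ acquires extra, non-variable generators beyond $\frak p$. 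The idea is to reflect any such generator back through $I:(m')\subseteq(I,x_i):(m')=\frak q$. Concretely, let $n$ be a minimal generator of $I:(m')$; one checks first that $x_i\nmid n$ (if $x_i\mid n$, writing $n=x_in''$ and using that $x_i$ divides no generator in $G(I)$ together with $x_i\nmid m'$ yields $n''\in I:(m')$, contradicting minimality), and that $n$ is divisible by no $x_j$ with $j\in A$ (otherwise $n$ would fail to be a minimal generator, since each such $x_j$ already lies in the colon). Hence $\supp(n)\cap(A\cup\{i\})=\emptyset$; but $n\in I:(m')\subseteq\frak q=(x_j:j\in A\cup\{i\})$ forces $\supp(n)$ to meet $A\cup\{i\}$, a contradiction unless $n=x_j$ for some $j\in A$. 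Thus $I:(m')=\frak p$. Since the witness produced in each direction serves in the other, the assignment $\frak p\leftrightarrow(\frak p,x_i)$ is a genuine bijection, which is the assertion of the lemma.
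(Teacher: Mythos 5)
Your proof is correct and complete. Note first that the paper itself contains no proof of this lemma: it is imported verbatim from \cite{TM}, so there is no in-paper argument to compare against. Your argument is the direct, elementary one, and every step checks out: the preliminary observation that no associated prime of $I$ contains $x_i$ (and that witnesses may be taken coprime to $x_i$, since $I:(m)=I:(m/x_i^{\nu_i(m)})$ when no generator involves $x_i$); the forward colon computation $(I,x_i):(m)=(\frak{p},x_i)$; and, in the converse, the genuinely delicate step, where you rule out extra minimal generators $n$ of $I:(m')$ by showing that any such $n$ other than a variable $x_j$, $j\in A$, would satisfy $\supp(n)\cap(A\cup\{i\})=\emptyset$ while lying in $I:(m')\subseteq(I,x_i):(m')=\frak{q}=(x_l : l\in A\cup\{i\})$, a contradiction. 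Together with the fact that associated primes of $I$ never contain $x_i$, this does give a genuine bijection $\frak{p}\leftrightarrow(\frak{p},x_i)$. For comparison, a more structural route (and the way such statements are usually derived) is to note that, since no minimal generator of $I$ involves $x_i$, one has $S/(I,x_i)\cong S'/I'$ and $S/I\cong (S'/I')[x_i]$, where $S'=\kk[x_1,\ldots,\widehat{x_i},\ldots,x_n]$ and $I'=I\cap S'$; the correspondence then follows from the standard behavior of associated primes under adjoining a polynomial variable. Your monomial-by-monomial argument buys complete self-containedness at the cost of somewhat more bookkeeping; the structural argument is shorter but leans on facts about flat/polynomial extensions.
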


\begin{Proposition}\label{d=2v=x_2^2} Let $d=2$ and $I=(\mathcal{L}(u,v))$ be a lexsegment ideal such that $u\leq_{lex}x_1x_3$ and $v=x_2^2$. Then
$I$ is normally torsion-free.
\end{Proposition}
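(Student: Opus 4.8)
The plan is to normalise the generators, cut the ambient ring down with Lemma~\ref{subring}, and then show that passing to powers never enlarges the set of associated primes, by means of one explicit membership criterion for $I^k$.

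Since $x_1\mid u$, $\deg u=2$ and $u\leq_{lex}x_1x_3$, the monomial $u$ must be $u=x_1x_j$ for some $3\leq j\leq n$, so that $\mathcal{L}(u,v)=\{x_1x_j,x_1x_{j+1},\ldots,x_1x_n,x_2^2\}$ and $I=(x_2^2,x_1x_j,\ldots,x_1x_n)$. The variables $x_3,\ldots,x_{j-1}$ divide no element of $G(I)$, hence no element of $G(I^k)$ for any $k$; applying Lemma~\ref{subring} once for each of them gives a bijection $\Ass(S/I^k)\leftrightarrow\Ass(S/(I^k,x_3,\ldots,x_{j-1}))$ compatible with adjoining those variables, which reduces the whole question to the case $j=3$. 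So I may assume $I=(x_2^2,x_1x_3,\ldots,x_1x_n)$.

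A product of $k$ generators of $I$ has the form $x_1^{b}x_2^{2(k-b)}\mu$ with $0\leq b\leq k$ and $\mu$ a monomial of degree $b$ in $x_3,\ldots,x_n$, so $I^k$ is generated by these. From this one reads off the criterion that drives everything: for a monomial $x_1^{p_1}x_2^{p_2}\cdots x_n^{p_n}$, writing $e=p_3+\cdots+p_n$,
$$x_1^{p_1}x_2^{p_2}\cdots x_n^{p_n}\in I^{k}\iff k-\lfloor p_2/2\rfloor\leq p_1\ \text{and}\ k-\lfloor p_2/2\rfloor\leq e.$$
The essential feature is that only the \emph{total} exponent $e$ of $x_3,\ldots,x_n$ enters. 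Since $\sqrt{I}=(x_2,x_1x_3,\ldots,x_1x_n)$, its minimal primes are $(x_1,x_2)$ and $(x_2,x_3,\ldots,x_n)$, and the goal becomes to prove $\Ass(S/I^k)=\{(x_1,x_2),(x_2,x_3,\ldots,x_n)\}$ for every $k\geq1$. As $\Min(S/I)\subseteq\Ass(S/I^k)$ always holds, only the inclusion ``$\subseteq$'' needs proof, and then the case $k=1$ yields normal torsion-freeness.

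So let $\frak{p}=(x_i:i\in A)\in\Ass(S/I^k)$, say $\frak{p}=I^k:(m)$ with $m\notin I^k$. Because $\frak{p}\supseteq\sqrt{I^k}=\sqrt I$, automatically $2\in A$, and $\frak p\supseteq\sqrt I$ forces $1\in A$ or $\{3,\ldots,n\}\subseteq A$. The criterion shows that multiplying $m$ by any $x_i$ with $i\geq3$ changes only $e$, so $x_im\in I^k$ holds simultaneously for all $i\in\{3,\ldots,n\}$ or for none; hence $A\cap\{3,\ldots,n\}$ is either empty or all of $\{3,\ldots,n\}$. Combining these constraints leaves only $A\in\{\{1,2\},\{2,3,\ldots,n\},\{1,\ldots,n\}\}$, i.e. $\frak p$ is one of the two minimal primes or $\frak m$. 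The main obstacle is therefore to exclude $\frak{p}=\frak{m}$, and here I would argue by contradiction: if $\frak m=I^k:(m)$ with $m=x_1^{c_1}\cdots x_n^{c_n}$ and $e=c_3+\cdots+c_n$, then $x_1m\in I^k$ forces $k-\lfloor c_2/2\rfloor\leq e$, while $x_3m\in I^k$ forces $k-\lfloor c_2/2\rfloor\leq c_1$; these two inequalities are exactly the criterion for $m\in I^k$, contradicting $m\notin I^k$. Thus $\frak{m}\notin\Ass(S/I^k)$, whence $\Ass(S/I^k)=\Min(S/I)=\Ass(S/I)$ for all $k\geq1$ and $I$ is normally torsion-free. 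This impossibility of an embedded $\frak m$ is precisely why this family is the exceptional case (ii) excluded in Theorem~\ref{depthpower0}.
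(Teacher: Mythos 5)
Your proposal is correct and takes essentially the same route as the paper: the same reduction to $u=x_1x_3$ via Lemma~\ref{subring}, the same identification of $\Min(S/I)=\{(x_1,x_2),(x_2,x_3,\ldots,x_n)\}$, and the same core contradiction, namely that for a witness monomial $m$ of a prime containing $x_1$ and some $x_i$ with $i\geq3$, the memberships $x_1m\in I^k$ and $x_im\in I^k$ together force $m\in I^k$. The only difference is presentational: you package the exponent bookkeeping into an explicit membership criterion for $I^k$ (which also neatly isolates $\frak{m}$ as the only possible embedded prime), whereas the paper derives the two opposing inequalities $a_1+1\leq a_3+\cdots+a_n$ and $a_3+\cdots+a_n+1\leq a_1$ directly from the generators.
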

\begin{proof} According to Lemma~\ref{subring}, it is enough to show the statement for the case $u=x_1x_3$. Therefore $I=(x_1x_3,\ldots,x_1x_n,x_2^2)$. One may note that $$I=(x_1)\cap(x_3,\ldots,x_n)+(x_2^2)=(x_1,x_2^2)\cap(x_2^2,x_3,\ldots,x_n),$$ which is a standard primary decomposition of $I$, therefore $$\Ass(S/I)=\{(x_1,x_2),(x_2,x_3,\ldots,x_n)\}.$$ 

Since $\Ass(S/I)=\Min(S/I)\subseteq\Ass(S/I^k)$, we only have to prove that $\Ass(S/I)\supseteq\Ass(S/I^k)$ for any $k\geq 2$.

 Let $k\geq 2$ and $\frak{p}\in\Ass(S/I^k)$. One may easy note that we must have $x_2\in\frak{p}$ since $(x_2^{2})^{k}\in I^k$. Since $(x_2,\ldots,x_n)$ is obviously a minimal prime ideal over $I^k$, we have only to consider the case when $x_1\in \frak{p}$.

Therefore, let $x_1\in\frak{p}$. We assume by contradiction that there exists $3\leq i\leq n$ such that $x_i\in\frak{p}$. Since $\frak{p}\in\Ass(S/I^k)$, there exists a monomial $m\notin I^k$ such that $\frak{p}=I^k:(m)$. In particular, $\frak{p}m\subseteq I^k$. Therefore $x_1m\in I^k$ and $x_im\in I^k$. Since $x_im\in I^k$, we get that $x_1\mid m$. Let $m=x_1^{a_1}\cdots x_n^{a_n}$. Since $x_1m\in I^k$ and $m\notin I^k$ we must have $a_1+1\leq a_3+\cdots+a_n$. Indeed, if $a_1+1>a_3+\cdots+a_n$, then $a_1\geq a_3+\cdots+a_n$ and, taking into account that the monomials from $\mathcal{L}(u,v)$ which are divisible by $x_1$ are of the form $x_1x_j$, with $j\geq 3$, we get that $m\in I^k$, contradiction. On the other hand, $x_im\in I^k$ and $m\notin I^k$ imply $a_1\geq a_3+\cdots+a_n+1$ using a similar argument. Thus $a_1\geq a_1+2$, contradiction. Therefore, the only associated prime ideal of $I^k$ which contains $x_1$ is $(x_1,x_2)$.
\end{proof}

\begin{Corollary}\label{d=2norm} Let $d=2$ and $I=(\mathcal{L}(u,v))$ be a lexsegment ideal such that $u=x_1x_i$ for some $3\leq i\leq n$ and $v=x_2^2$. Then
$\depth(S/I^k)=i-2$, for any $k\geq 1$.
\end{Corollary}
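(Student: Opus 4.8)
The plan is to pin down $I$ and its associated primes, then sandwich $\depth(S/I^k)$ between matching upper and lower bounds. Since $u=x_1x_i$ and $v=x_2^2$, every degree-$2$ monomial strictly between them in the lex order is divisible by $x_1$, so $\mathcal{L}(u,v)=\{x_1x_i,x_1x_{i+1},\ldots,x_1x_n,x_2^2\}$ and hence $I=(x_1x_i,\ldots,x_1x_n,x_2^2)$. Exactly as in the proof of Proposition~\ref{d=2v=x_2^2}, this ideal has the standard primary decomposition $I=(x_1,x_2^2)\cap(x_2^2,x_i,\ldots,x_n)$, so that $\Ass(S/I)=\{(x_1,x_2),\,(x_2,x_i,\ldots,x_n)\}$. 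By Proposition~\ref{d=2v=x_2^2} the ideal $I$ is normally torsion-free, whence $\Ass(S/I^k)=\Ass(S/I)$ for every $k\geq1$; these two facts are the only inputs I will need.

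For the upper bound I would invoke the standard inequality $\depth(S/I^k)\leq\dim(S/\frak{p})$, valid for every $\frak{p}\in\Ass(S/I^k)$. Taking $\frak{p}=(x_2,x_i,\ldots,x_n)$, which is an associated prime of $S/I^k$ by the previous paragraph, its height is $n-i+2$, so $\dim(S/\frak{p})=i-2$ and therefore $\depth(S/I^k)\leq i-2$ for all $k$.

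For the lower bound I would exploit that the variables $x_3,\ldots,x_{i-1}$ divide no generator of $I$. Writing $S'=\kk[x_1,x_2,x_i,x_{i+1},\ldots,x_n]$ and letting $I'\subset S'$ be the ideal generated by the same monomials, one has $S/I^k=(S'/I'^k)[x_3,\ldots,x_{i-1}]$, and adjoining $i-3$ polynomial variables raises depth by exactly $i-3$, so $\depth_S(S/I^k)=\depth_{S'}(S'/I'^k)+(i-3)$. In $S'$ the ideal $I'$ is again of the form treated in Proposition~\ref{d=2v=x_2^2} (now with the smallest ``product'' variable $x_i$ playing the role of $x_3$), hence normally torsion-free with $\Ass(S'/I'^k)=\Ass(S'/I')=\{(x_1,x_2),(x_2,x_i,\ldots,x_n)\}$; in particular the graded maximal ideal $\frak{m}'$ of $S'$ is not associated, so $\depth_{S'}(S'/I'^k)\geq1$. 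This yields $\depth_S(S/I^k)\geq(i-3)+1=i-2$, and combined with the upper bound gives the asserted equality $\depth(S/I^k)=i-2$ for all $k\geq1$.

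The routine verifications (the lex description of $\mathcal{L}(u,v)$, the primary decomposition, and the polynomial-extension depth formula) are standard; the genuine content is concentrated in the lower bound, and its crux is the observation that after deleting the free variables $x_3,\ldots,x_{i-1}$ the reduced ideal $I'$ is still a lexsegment ideal covered by Proposition~\ref{d=2v=x_2^2}, so that normal torsion-freeness forces $\frak{m}'\notin\Ass(S'/I'^k)$ and hence positive depth in the small ring. Without that reduction, bounding the depth from below directly in $S$ would be the main difficulty.
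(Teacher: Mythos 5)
Your proof is correct and follows essentially the same route as the paper's: both reduce to the smaller ring $S'=\kk[x_1,x_2,x_i,\ldots,x_n]$ using that $x_3,\ldots,x_{i-1}$ divide no generator of $I$ (your polynomial-extension depth formula is equivalent to the paper's regular-sequence argument), and both extract the depth from the normal torsion-freeness established in Proposition~\ref{d=2v=x_2^2}. If anything, you are more explicit than the paper, which asserts $\depth(S'/I'^k)=1$ directly from that proposition, leaving implicit the two standard facts you spell out, namely $\depth(S/I^k)\leq\dim(S/\frak{p})$ for every associated prime $\frak{p}$ and $\depth\geq 1$ precisely when the graded maximal ideal is not associated.
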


\begin{proof} If $i=3$, then, according to Proposition \ref{d=2v=x_2^2}, $\depth(S/I^k)=1$ for any $k\geq 1$. If $4\leq i\leq n$, then $x_{3},\ldots,x_{i-1}$ is a regular sequence on $S/I^k$, for any $i\geq 4$. Then $\depth(S/I^k)=\depth(S/(I^{k},x_3,\ldots,x_{i-1}))+i-3$, for any $k\geq 1$. Let us consider the ideal $I'=I\cap S'\subseteq S'=\kk[x_1,x_2,x_i,\ldots,x_n]$. By the first part of the proof, $\depth(S'/I'^k)=1$ for any $k\geq1$. On the other hand $\depth(S'/I'^k)=\depth(S/(I^{k},x_3,\ldots,x_{i-1}))$, therefore $\depth(S/I^k)=i-2$.
\end{proof}

We now consider that $x_2^{d-1}x_M<_{lex}v\leq x_2^d$ and $w>_{lex}x_2u/x_1$, where $w$ is the greatest monomial of degree $d$ such that $w<_{lex}v$ and $M=\min(u/x_1)$. We need the following lemma.

\begin{Lemma}\label{lemma} Let $I$ be a monomial ideal in the polynomial ring in $n$ variables over a field $\kk$, $S=\kk[x_1,\ldots,x_t,y_1,\ldots,y_s]$ with $s+t=n$, such that $I=JS+KS$ where $J\subset S_1=\kk[x_1,\ldots,x_t]$ and $K\subset S_2=\kk[y_1,\ldots,y_s]$. Then $\frak{p}\in \Ass_{S}(S/I)$ if and only if $\frak{p}=\frak{p}_1S+\frak{p}_2S$, where $\frak{p}_1\in\Ass_{S_1}(S_1/J)$ and $\frak{p}_2\in\Ass_{S_2}(S_2/K)$.
\end{Lemma}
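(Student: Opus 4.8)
The plan is to work directly with colon ideals, exploiting the fact that $J$ and $K$ live in disjoint sets of variables. First I would recall the standard fact that for a monomial ideal every associated prime is a \emph{monomial} prime, i.e.\ generated by a subset of the variables, and arises as $I:(m)$ for some monomial $m$. Since $G(I)=G(J)\cup G(K)$, with the generators of $J$ involving only the $x_i$ and those of $K$ only the $y_j$, I would write an arbitrary monomial as $m=m_1m_2$ with $m_1\in S_1$ and $m_2\in S_2$ and establish the key formula
$$I:_S(m)=\big(J:_{S_1}(m_1)\big)S+\big(K:_{S_2}(m_2)\big)S.$$
This is immediate from the description $I:(m)=\big(g/\gcd(g,m):g\in G(I)\big)$, because $\gcd(g,m)=\gcd(g,m_1)$ when $g\in G(J)$ and $\gcd(g,m)=\gcd(g,m_2)$ when $g\in G(K)$.

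For the ``if'' direction, given $\frak{p}_1=J:_{S_1}(m_1)\in\Ass_{S_1}(S_1/J)$ and $\frak{p}_2=K:_{S_2}(m_2)\in\Ass_{S_2}(S_2/K)$, I would set $m=m_1m_2$; the displayed formula then yields $I:(m)=\frak{p}_1S+\frak{p}_2S$, which is a monomial prime of $S$ (generated by the variables occurring in $\frak{p}_1$ and $\frak{p}_2$), hence lies in $\Ass_S(S/I)$.

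For the ``only if'' direction, I would take $\frak{p}\in\Ass_S(S/I)$, write $\frak{p}=I:(m)$ with $m=m_1m_2$, and apply the formula to obtain $\frak{p}=\big(J:_{S_1}(m_1)\big)S+\big(K:_{S_2}(m_2)\big)S$. Setting $A=J:_{S_1}(m_1)$ and $B=K:_{S_2}(m_2)$, I would argue that the minimal monomial generators of $AS+BS$ are exactly $G(A)\cup G(B)$ — the $x$-monomials coming from $A$ and the $y$-monomials from $B$ — since no nontrivial $x$-monomial divides a $y$-monomial. Comparing with the minimal generators of the monomial prime $\frak{p}$, which are variables, forces $A=\frak{p}_1$ and $B=\frak{p}_2$, where $\frak{p}_1$ and $\frak{p}_2$ are the parts of $\frak{p}$ in the $x$- and the $y$-variables. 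In particular $\frak{p}_1$ and $\frak{p}_2$ are themselves prime, so $\frak{p}_1\in\Ass_{S_1}(S_1/J)$ and $\frak{p}_2\in\Ass_{S_2}(S_2/K)$.

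The main obstacle is the generator-separation step: one must be sure that the equality $AS+BS=\frak{p}_1S+\frak{p}_2S$ of monomial ideals really does split variable-wise as $A=\frak{p}_1$ and $B=\frak{p}_2$. This is precisely where the disjointness of the two variable sets is used, and it is also the point at which the primeness of the two factors is read off. The remaining ingredients — the colon formula and the fact that the associated primes of a monomial ideal are monomial primes — are routine.
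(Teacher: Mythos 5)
Your proof is correct, but it follows a genuinely different route from the paper's. The paper argues through standard primary decompositions: it writes $J=\bigcap_i Q_i$ and $K=\bigcap_j Q'_j$ as irredundant intersections of irreducible monomial ideals, notes that $I=\bigcap_{i,j}(Q_i+Q'_j)S$ is then a standard primary decomposition of $I$, and reads off $\Ass_S(S/I)$ as the set of radicals $\sqrt{Q_i+Q'_j}S=\sqrt{Q_i}S+\sqrt{Q'_j}S$. You instead compute colon ideals directly, using the formula $I:_S(m)=\bigl(J:_{S_1}(m_1)\bigr)S+\bigl(K:_{S_2}(m_2)\bigr)S$ for $m=m_1m_2$, together with two standard facts: associated primes of monomial ideals are monomial primes realized as $I:(m)$ for a monomial $m$, and minimal monomial generating sets are unique, which is what justifies your generator-separation step (the equality $AS+BS=\frak{p}$ with $A\subset S_1$, $B\subset S_2$ forces $A$ and $B$ to be generated by the $x$-variables, respectively the $y$-variables, occurring in $\frak{p}$). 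Your argument is more elementary and constructive---it exhibits the witness monomial $m_1m_2$ explicitly in the forward direction---and it sidesteps the need to verify that the intersection $\bigcap_{i,j}(Q_i+Q'_j)S$ is irredundant, a point the paper asserts without proof. The paper's proof is shorter if one takes the theory of irreducible decompositions of monomial ideals as given. Both arguments exploit the disjointness of the two variable sets at the decisive moment: yours in the gcd computation and the splitting of generators, the paper's in distributing the sum over the two intersections.
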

\begin{proof} Let $J=\bigcap\limits_{i=1}^s Q_i$ and $K=\bigcap\limits_{j=1}^r Q'_j$ be the two standard primary decomposition of the ideals $J$ and $K$ in the polynomial rings $S_1$ and $S_2$ respectively. This means that $Q_i$ and $Q'_j$ are irreducible monomial ideals and the intersection is irredundant. Then $I=JS+KS=\bigcap\limits_{i,j}(Q_i+Q'_j)S$ is the standard primary decomposition of $I$. One has $\frak{p}\in \Ass_{S}(S/I)$ if and only if there exists $1\leq i\leq s$ and $1\leq j\leq r$ such that $\frak{p}=\sqrt{Q_i+Q'_j}S=\sqrt{Q_i}S+\sqrt{Q'_j}S$. This is equivalent to $\frak{p}=\frak{p}_iS+\frak{p}_j S$ for some $i$ and $j$, with $1\leq i\leq s$ and $1\leq j\leq r$, where $\frak{p_i}=\sqrt{Q_i}\subset S_1$  and $\frak{p_j}=\sqrt{Q'_j}\subset S_2$. Therefore $\frak{p}=\frak{p}_iS+\frak{p}_j S$ for some $i,j$, $1\leq i\leq s$ and $1\leq j\leq r$, where $\frak{p_i}\in\Ass_{S_1}(S_/J)$ and $\frak{p_j}\in\Ass_{S_2}(S_2/K)$.
\end{proof}

\begin{Proposition}\label{x2xm} Let $I=(\mathcal{L}(u,v))$ be a lexsegment ideal such that $v>_{lex}x_{2}^{d-1}x_M$ and $w>_{lex}x_2u/x_1$, where $w$ is the greatest monomial of degree $d$ with $w<_{lex}u$ and $M=\min(u/x_1)$. Then $I$ is normally torsion-free.
\end{Proposition}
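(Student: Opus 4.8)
The plan is to exploit the disjoint-variable splitting underlying Lemma~\ref{lemma}. First I would pin down the shape of $I$. Since $w>_{lex}x_2u/x_1$ we have $\nu_1(u)=1$; writing $u=x_1x_M^{a_M}\cdots x_n^{a_n}$ with $M=\min(u/x_1)$, the standing assumption $x_1\nmid v$ gives $v\leq_{lex}x_2^d$, and together with $v>_{lex}x_2^{d-1}x_M$ this forces $M\geq3$ and $v=x_2^{d-1}x_j$ for some $2\leq j\leq M-1$ (with the convention $x_2^{d-1}x_2=x_2^d$). Splitting $G(I)$ according to divisibility by $x_1$, one checks that the generators divisible by $x_1$ are exactly the monomials $x_1m'$ with $m'$ of degree $d-1$ in $x_M,\ldots,x_n$ and $m'\leq_{lex}u/x_1$, while those not divisible by $x_1$ are exactly the $x_2^{d-1}x_l$ with $2\leq l\leq j$. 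Hence $I=JS+KS$, where $J=x_1\mathcal{L}^f(u/x_1)\subset R_1:=\kk[x_1,x_M,\ldots,x_n]$ and $K=x_2^{d-1}(x_2,\ldots,x_j)\subset R_2:=\kk[x_2,\ldots,x_j]$. Because $j\leq M-1<M$, the variables of $R_1$ and of $R_2$ are disjoint, so $I$ has the form required by Lemma~\ref{lemma}.

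Next I would prove that each factor is normally torsion-free in its own ring. For $K$, dividing every generator of $K^l$ by $x_2^{(d-1)l}$ and repeating the argument of the first lemma of this section yields $\Ass_{R_2}(R_2/K^l)=\{(x_2)\}\cup\Ass_{R_2}(R_2/(x_2,\ldots,x_j)^l)=\{(x_2),(x_2,\ldots,x_j)\}$ for every $l\geq1$, since powers of the monomial prime $(x_2,\ldots,x_j)$ are primary. For $J$, the decisive point is that the final lexsegment $L:=\mathcal{L}^f(u/x_1)$ in $T:=\kk[x_M,\ldots,x_n]$ always has $\depth(T/L)=0$: by the criterion of \cite[Proposition 3.2]{EOS} one has $x_n(u/x_1)\geq_{lex}x_Mx_n^{d-1}$ for \emph{every} monomial $u/x_1$ with $\min(u/x_1)=M$, so $L$ is normally torsion-free by Proposition~\ref{depth0}. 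Since $x_1$ divides no generator of $L$, we have $J^l=(x_1^l)\cap L^lR_1$, and the reduction argument then gives $\Ass_{R_1}(R_1/J^l)=\{(x_1)\}\cup\Ass_T(T/L^l)=\{(x_1)\}\cup\Ass_T(T/L)=\Ass_{R_1}(R_1/J)$, so $J$ is normally torsion-free too.

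Finally I would show that a sum of normally torsion-free monomial ideals in disjoint sets of variables is again normally torsion-free, and apply this to $I=JS+KS$ (the variables $x_{j+1},\ldots,x_{M-1}$, which divide no generator, are discarded by Lemma~\ref{subring}). The mechanism is the membership rule: for a monomial $m=m_1m_2$ with $m_1\in R_1$ and $m_2\in R_2$ one has $m\in I^k$ if and only if $\alpha(m_1)+\beta(m_2)\geq k$, where $\alpha(m_1)=\max\{a:m_1\in J^a\}$ and $\beta(m_2)=\max\{b:m_2\in K^b\}$. For a witness $m$ of an associated prime one has $\alpha(m_1)+\beta(m_2)=k-1$, and the colon $I^k:(m)$ then splits as $(J^{\alpha(m_1)+1}:m_1)+(K^{\beta(m_2)+1}:m_2)$; consequently every associated prime of $S/I^k$ has the shape $\frak p_1+\frak p_2$ with $\frak p_1\in\Ass_{R_1}(R_1/J^a)$, $\frak p_2\in\Ass_{R_2}(R_2/K^b)$ and $a+b=k+1$, and conversely all such sums arise. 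As $J$ and $K$ are normally torsion-free these sets are independent of $a$ and $b$, whence $\Ass_S(S/I^k)=\Ass_S(S/I)$ for all $k$, which is the assertion.

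The step I expect to be the main obstacle is this last one: Lemma~\ref{lemma} only describes the associated primes of the \emph{first} power of a disjoint sum, so the real content is controlling $\Ass_S(S/I^k)$ for all $k$. Establishing the membership rule is routine, but verifying that the colon of a witness monomial splits cleanly as $\frak p_1+\frak p_2$ with the indicated $J$- and $K$-orders — and that this gives a bijection onto the pairs in $\Ass(R_1/J^a)\times\Ass(R_2/K^b)$ with $a+b=k+1$ — is the delicate point, since it is precisely here that one must show the embedded primes of $I$ persist while no new embedded prime (in particular not $\frak m$, in accordance with $\depth(S/I^k)\neq0$ from Section~1) is created.
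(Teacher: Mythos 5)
Your setup and the two ``factor'' statements are correct, and part of your route is genuinely nicer than the paper's. The splitting $I=JS+KS$ with $J=x_1\mathcal{L}^f(u/x_1)\subset R_1$ and $K=x_2^{d-1}(x_2,\ldots,x_j)\subset R_2$ is exactly the paper's decomposition (after the same use of Lemma~\ref{subring}), but the paper never isolates the fact that $J$ and $K$ are themselves normally torsion-free; your argument for $J$ --- the final lexsegment $L=\mathcal{L}^f(u/x_1)$ always satisfies the depth-zero criterion of \cite[Proposition 3.2]{EOS} in $\kk[x_M,\ldots,x_n]$, so Proposition~\ref{depth0} applies, and the factor $x_1^l$ peels off by the argument of the first lemma of Section 2 --- is correct and slick. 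The forward half of your combination step is also sound: multiplying a monomial by a single variable raises its $J$- or $K$-order by at most one (this does hold for monomial ideals), so a witness $m=m_1m_2$ for $\frak{p}\in\Ass_S(S/I^k)$ has $\alpha(m_1)+\beta(m_2)=k-1$, the colon splits as you claim because $\frak{p}$ is generated by variables, and one obtains $\Ass_S(S/I^k)\subseteq\{\frak{p}_1+\frak{p}_2:\frak{p}_1\in\Ass_{R_1}(R_1/J),\,\frak{p}_2\in\Ass_{R_2}(R_2/K)\}=\Ass_S(S/I)$ using normal torsion-freeness of $J$, $K$ and Lemma~\ref{lemma}; the paper proves this inclusion instead by an ad hoc structural case analysis.

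The genuine gap is the opposite inclusion $\Ass_S(S/I)\subseteq\Ass_S(S/I^k)$, i.e.\ your unproved assertion that ``conversely all such sums arise''. This does not follow formally from normal torsion-freeness of $J$ and $K$. Concretely: if $m_1$ is a witness for $\frak{p}_1\in\Ass_{R_1}(R_1/J^a)$ and $m_2$ one for $\frak{p}_2\in\Ass_{R_2}(R_2/K^b)$ with $a+b=k+1$, the product $m_1m_2$ is a witness for $\frak{p}_1+\frak{p}_2$ in $S/I^k$ only when the orders are \emph{exact}, $\alpha(m_1)=a-1$ and $\beta(m_2)=b-1$. A witness only guarantees $\beta(m_2)\leq b-1$, and if $\beta(m_2)<b-1$ then for $x_i\in\frak{p}_1$ one has $\alpha(x_im_1)+\beta(m_2)\leq(\alpha(m_1)+1)+\beta(m_2)\leq a+\beta(m_2)<k$, so $x_i\notin I^k:(m_1m_2)$ and the construction fails. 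What is actually needed is a witness in $K^{k-1}\setminus K^k$, i.e.\ $\frak{p}_2\in\Ass_{R_2}(K^{k-1}/K^k)$, paired with an order-zero witness for $\frak{p}_1\in\Ass_{R_1}(R_1/J)$; membership in associated primes of \emph{successive quotients} is not a consequence of $\Ass_{R_2}(R_2/K^b)$ being constant in $b$, which is precisely why the general description of $\Ass$ of powers of sums in disjoint variables is a nontrivial theorem in its own right and is stated there in terms of $\Ass(K^{b-1}/K^b)$ rather than $\Ass(R_2/K^b)$. In the present situation the gap closes easily because $K$ is so simple: $x_2^{dk-1}$ and $v^k/x_2$ lie in $K^{k-1}\setminus K^k$ and satisfy $K^k:(x_2^{dk-1})=(x_2,\ldots,x_j)$ and $K^k:(v^k/x_2)=(x_2)$, so multiplying them by a witness for $\frak{p}_1$ (which automatically has $J$-order zero) gives the required witnesses --- and these are exactly the monomials the paper exhibits in its proof of this inclusion. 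So your architecture is viable, but the step you yourself flagged as delicate is where the proof is missing, and it must be completed by such explicit exact-order witnesses rather than by appealing to normal torsion-freeness of the summands.
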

\begin{proof} Since $w>_{lex}x_2u/x_1$ we have that $\nu_1(u)=1$. Moreover, since $x_1\nmid v$, we must have $M\geq 3$, that is $u\leq_{lex}x_1x_3^{d-1}$. Using Lemma~\ref{subring}, it is enough to consider the case when $v=x_2^{d-1}x_{M-1}$. 

We have to prove that $\Ass_S(S/I)=\Ass_S(S/I^k)$, for any $k\geq 2$.

``$\subseteq$" Let $k\geq 2$ and $\frak{p}\in\Ass_S(S/I)$, that is there exists a monomial $m\notin I$ such that $I:(m)=\frak{p}$. We have to prove that $\frak{p}\in\Ass_S(S/I^k)$, that is there exists a monomial $\omega\notin I^k$ such that $\frak{p}=I^k:_S (\omega)$.

Let us denote $J=x_1(\mathcal{L}_{S_1}^f(u/x_1))\subset S_1'=\kk[x_1,x_M,\ldots,x_n]$, and $K=(\mathcal{L}^i_{S_2}(v))\subset S_2=\kk[x_2,\ldots,x_{M-1}]$. We have $I=JS+KS$. Using Lemma~\ref{lemma}, we get that there exist $\frak{p}_1\in\Ass_{S'_1}(S'_1/J)$ and $\frak{p}_2\in\Ass_{S_2}(S_2/K)$ such that $\frak{p}=\frak{p}_1S+\frak{p}_2S$. 

Taking into account that $J=(x_1)\cap(\mathcal{L}_{S_1}^f(u/x_1))$, the only associated prime ideal of $J$ which contains $x_1$ is $(x_1)$.
Since we may write $K=x_2^{d-1}(x_2,\ldots,x_{M-1})$ we get $\Ass_{S_2}(S_2/K)=\{(x_2),(x_2,\ldots,x_{M-1})\}$. Thus, if $x_1\in\frak{p}$, then $\frak{p}=(x_1,x_2)$ or $\frak{p}=(x_1,x_2,\ldots,x_{M-1})$. In the first case, we have that $(x_1,x_2)\in\Min(S/I)$, and in the second case, $I^k:_S(x_n^{d-1}x_2^{dk-1})=(x_1,x_2,\ldots,x_{M-1})$, thus both ideals are in $\Ass_S(S/I^k)$.

Let us assume now that $x_1\notin\frak{p}$. Since $\frak{p}_1\in\Ass_{S'_1}(S'_1/J)$ there exists a monomial $\alpha\in S'_1$ such that $J:_{S'_1}(\alpha)=\frak{p}_1$.
If $\frak{p}_2=(x_2)$, then one may consider the monomial $\omega=\alpha v^{k}/x_2\notin I^k$ such that $\frak{p}=I^k:_S(\omega)$. If $\frak{p}_2=(x_2,\ldots,x_{M-1})$, then $\frak{p}=I^k:_S(\alpha x_2^{dk-1})$, therefore $\frak{p}\in\Ass_S(S/I^k)$.

``$\supseteq$" Let $k\geq 2$ and $\frak{p}\in\Ass_S(S/I^k)$. We have to prove that $\frak{p}\in\Ass_S(S/I)$, which, by Lemma~\ref{lemma}, means to show that there exist $\frak{p}_1\subset S'_1$, $\frak{p}_1\in\Ass_{S'_1}(S'_1/J)$ and $\frak{p}_2\subset S_2$, $\frak{p}_2\in\Ass_{S_2}(S_2/K)$ such that $\frak{p}=\frak{p}_1S+\frak{p}_2S$. Since $\frak{p}\supset I^k$, we get $\frak{p}\supset J^k$ and $\frak{p}\supset K^k$, therefore there exist $\frak{p}_1\subset S'_1$ and $\frak{p}_2\subset S_2$ such that $\frak{p}=\frak{p}_1S+\frak{p}_2S$. We have to prove that $\frak{p}_1\in\Ass_{S'_1}(S'_1/J)$ and $\frak{p}_2\in\Ass_{S_2}(S_2/K)$.

Since $\frak{p}\in\Ass_S(S/I^k)$, there exists a monomial $\omega\notin I^k$ such that $\frak{p}=I^k:_S (\omega)$. Using that $\frak{p}=\frak{p}_1S+\frak{p}_2S$, we have $\supp(\omega)\cap\{1,M,\ldots,n\}\neq\emptyset$ and $\supp(\omega)\cap\{2,\ldots,M-1\}\neq\emptyset$, that is $\omega=\omega_1\omega_2$, with $\omega_1\in S'_1$ and $\omega_2\in S_2$. Moreover, $\frak{p}\omega\subseteq J^{k_1}K^{k_2}$ with $k_1,k_2\geq1$ and $k_1+k_2=k$. One may easy note that $\omega_1\notin J^{k_1}$ and $\omega_2\notin K^{k_2}$, otherwise we get a contradiction. 

Firstly, we prove that $\frak{p}_1\in\Ass_{S'_1}(S'_1/J)$. Let us assume that $x_1\in\frak{p}$. We show that, for any $i$, $M\leq i\leq n$, $x_i\notin\frak{p}$. Assume that there exists $M\leq i\leq n$ such that $x_i\in\frak{p}$. In particular, we have $x_1\omega_1\in J^{k_1}$ and $x_i\omega_1\in J^{k_1}$. Since $J^{k_1}=x_1^{k_1}(\mathcal{L}_{S_1}^f(u/x_1))^{k_1}$ and using that $x_1\omega_1\in J^{k_1}$ and $\omega_1\notin J^{k_1}$, we get $x_1^{k_1-1}\mid \omega_1$ and $x_1^{k_1}\nmid \omega_1$. On the other hand, $x_i\omega_1\in J^{k_1}$ implies $x_1^{k_1}\mid \omega_1$, contradiction. Thus $\frak{p}_1=(x_1)\in\Min(S'_1/J)\subseteq\Ass_{S'_1}(S'_1/J)$. 

If $x_1\notin \frak{p}$, then $x_1\notin\frak{p}_1$. Taking into account that $(x_1x_{i}^{d-1})^{k}\in J^{k}$, for any $M+1\leq i\leq n$, and $\frak{p}\supseteq J^{k}$, we get $(x_{M+1},\ldots,x_n)\subset \frak{p}$, therefore $(x_{M+1},\ldots,x_n)\subseteq \frak{p}_1$. If $x_M\in\frak{p}$, then $\frak{p}_1=(x_M,\ldots,x_n)$, and, since $\frak{p}_1=J:_{S'_1}(u/x_M)$ we get $\frak{p}_1\in\Ass_{S'_1}(S'_1/J)$. If $x_M\notin\frak{p}$, we have $\frak{p}_1=(x_{M+1},\ldots,x_n)\in\Min(S'_1/J)$. Therefore $\frak{p}_1\in\Ass_{S'_1}(S'_1/J)$.

We now prove that $\frak{p}_2\in\Ass_{S_2}(S_2/K)$. We obviously have $x_2\in\frak{p}$ since $(x_2^{d})^{k}\in I^k$, thus $x_2\in \frak{p}_2$. We show that, if there exists $x_{i}\in\frak{p}$, $2<i<M$, then, $(x_2,\ldots,x_{M-1})\subseteq\frak{p}$. Indeed, if $x_{i}\in \frak{p}$, we get $x_{i}\omega_1\omega_2\in J^{k_1}K^{k_2}$, therefore $x_{i}\omega_2\in K^{k_2}$, that is there exists $\alpha_1,\ldots,\alpha_{k_2}\in \mathcal{L}_{S_2}^i(v)$ and $\beta\in S_2$ such that $x_i\omega_2=\alpha_1\cdots\alpha_{k_2}\beta$. In particular, there exists $1\leq t\leq k_2$ such that $\alpha_t=x_2^{d-1}x_i$. We get that $x_j\omega_2\omega_1=\alpha_1\cdots (x_j\alpha_t/x_i)\cdots\alpha_{k_2}\beta \omega_1\in I^k$ for any $2\leq j\leq M-1$. We obtain that $(x_2,\ldots,x_{M-1})\subseteq\frak{p}$. Thus $\frak{p}_2=(x_2,\ldots,x_{M-1})$ and $\frak{p}_2\in\Ass_{S_2}(S_2/K)$ since $\frak{p}=K:_{S_2}(x_{2}^{d-1})$. If $x_i\notin\frak{p}$ for any $3\leq i\leq M-1$, then $\frak{p}_2=(x_2)\in \Min(S_2/K)\subseteq\Ass_{S_2}(S_2/K)$.
\end{proof}

\begin{Corollary}\label{depthnoncomp} Let $I=(\mathcal{L}(u,v))$ be a lexsegment ideal such that $v=x_{2}^{d-1}x_{\ell}>_{lex}x_{2}^{d-1}x_M$ and $w>_{lex}x_2u/x_1$, where $w$ is the greatest monomial of degree $d$ with $w<_{lex}u$. Then $\depth(S/I^k)=M-\ell$, for any $k\geq1$.
\end{Corollary}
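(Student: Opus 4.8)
The plan is to follow the template of the proof of Corollary~\ref{d=2norm}: first peel off, as a regular sequence, the variables that occur in no minimal generator of $I$, and then compute the depth of the reduced ideal, which sits in precisely the normalised situation of Proposition~\ref{x2xm} with $\ell$ and $M$ adjacent.

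First I would record the shape of $G(I)$. Since $w>_{lex}x_2u/x_1$ forces $\nu_1(u)=1$ and $M=\min(u/x_1)\geq 3$, a degree-$d$ monomial $m$ lies in $\mathcal L(u,v)$ exactly when either $x_1\mid m$ and $m/x_1\leq_{lex}u/x_1$ (which, as in the proof of Proposition~\ref{x2xm}, uses only the variables $x_1,x_M,\ldots,x_n$) or $x_1\nmid m$ and $m\in\{x_2^d,x_2^{d-1}x_3,\ldots,x_2^{d-1}x_\ell\}$ (which uses only $x_2,\ldots,x_\ell$). Hence none of $x_{\ell+1},\ldots,x_{M-1}$ divides a generator of $I$, so these $M-1-\ell$ variables form a regular sequence on $S/I^k$ for every $k$. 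Writing $\bar S=\kk[x_1,\ldots,x_\ell,x_M,\ldots,x_n]$ and $\bar I=I\cap\bar S$, this yields
$$\depth(S/I^k)=\depth(\bar S/\bar I^k)+(M-1-\ell),$$
and reduces everything to proving $\depth(\bar S/\bar I^k)=1$ in the case $M=\ell+1$, i.e.\ $v=x_2^{d-1}x_{M-1}$, which is the normalised case of Proposition~\ref{x2xm}.

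For this base case I would use that $\bar I$ is normally torsion-free (Proposition~\ref{x2xm}), so $\Ass(\bar S/\bar I^k)=\Ass(\bar S/\bar I)$ for all $k$, and then pin the depth between two bounds read off from this set via Lemma~\ref{lemma}. Here $\bar I=J\bar S+K\bar S$ with $J\subset A=\kk[x_1,x_M,\ldots,x_n]$ and $K=x_2^{d-1}(x_2,\ldots,x_\ell)\subset B=\kk[x_2,\ldots,x_\ell]$ in disjoint variables, and a direct primary decomposition gives $\Ass(B/K)=\{(x_2),(x_2,\ldots,x_\ell)\}$, so the maximal ideal of $B$ is associated to $B/K$. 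For the lower bound $\depth\geq 1$ I would show the graded maximal ideal of $\bar S$ is not associated to $\bar S/\bar I$: by Lemma~\ref{lemma} that would force the maximal ideal of $A$ into $\Ass(A/J)$, which is impossible because $J=(x_1)\cap L$ with $L$ the final lexsegment ideal generated by $u/x_1$ in $\kk[x_M,\ldots,x_n]$, so the exact sequence $0\to A/J\to A/(x_1)\oplus A/L\to A/((x_1)+L)\to 0$ shows every associated prime of $A/J$ either equals $(x_1)$ or omits $x_1$, hence none equals the full maximal ideal of $A$.

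For the upper bound $\depth\leq 1$ I would exhibit an associated prime of coheight $1$. The key point is that the final lexsegment factor always satisfies $\depth(\kk[x_M,\ldots,x_n]/L)=0$: since $\min(u/x_1)=M$ forces $u/x_1\geq_{lex}x_Mx_n^{d-2}$, the criterion of \cite[Proposition~3.2]{EOS} applied in $\kk[x_M,\ldots,x_n]$, namely $x_n\cdot(u/x_1)\geq_{lex}x_Mx_n^{d-1}$, holds automatically, so $(x_M,\ldots,x_n)\in\Ass(A/J)$. Combining this with $(x_2,\ldots,x_\ell)\in\Ass(B/K)$ through Lemma~\ref{lemma} produces $(x_2,\ldots,x_n)\in\Ass(\bar S/\bar I)=\Ass(\bar S/\bar I^k)$, a prime with $\dim(\bar S/(x_2,\ldots,x_n))=1$, giving $\depth(\bar S/\bar I^k)\leq 1$. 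The two bounds then give $\depth(\bar S/\bar I^k)=1$, whence $\depth(S/I^k)=M-\ell$. I expect the main obstacle to be exactly this base case: arranging the two $\Ass$-computations so that the maximal ideal of $A$ is excluded (for $\depth\geq 1$) while the precise coheight-$1$ prime $(x_2,\ldots,x_n)$ is produced (for $\depth\leq 1$), which rests on correctly identifying $\Ass(A/J)$ for the final lexsegment factor.
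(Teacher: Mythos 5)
Your proposal is correct and follows essentially the same route as the paper: peel off $x_{\ell+1},\ldots,x_{M-1}$ as a regular sequence to reduce to $v=x_2^{d-1}x_{M-1}$, invoke Proposition~\ref{x2xm} to get $\Ass(\bar S/\bar I^k)=\Ass(\bar S/\bar I)$ for all $k$, and then pin the depth at $1$ by showing the maximal ideal is not associated while a coheight-one prime is. The only difference is bookkeeping: the paper exhibits the coheight-one prime directly via the colon $(x_2,\ldots,x_n)=I:(x_1x_n^{d-2}x_2^{d-1})$ (and merely asserts $\frak{m}\notin\Ass_S(S/I)$), whereas you obtain both facts through the splitting $I=JS+KS$, Lemma~\ref{lemma}, and the depth criterion of \cite{EOS} applied to the final-lexsegment factor --- both verifications are valid.
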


\begin{proof} We may assume that $v=x_{2}^{d-1}x_{M-1}$ since, otherwise, $x_{\ell+1},\ldots,x_{M-1}$ are regular on $S/I^k$, for any $k\geq1$. Using Proposition \ref{x2xm} and since $\frak{m}\notin\Ass_{S}(S/I)$, we get that $\depth(S/I^k)=1$ for any $k\geq1$, since $(x_2,\ldots,x_n)\in\Ass_S(S/I)$. Indeed, one may easy note that $(x_2,\ldots,x_n)=I:(x_1x_n^{d-2}x_2^{d-1})$. If $v=x_2^{d-1}x_{\ell}$, then $\depth(S/I^k)=M-\ell$, for any $k\geq1$.
\end{proof}

Now we may completely characterize all the lexsegment ideals which are normally torsion-free. One may note that our characterization depends on the ends of the lexsegment.

\begin{Theorem} Let $I=(\mathcal{L}(u,v))$ be a lexsegment ideal. The following condition are equivalent.
\begin{itemize}
 \item[(a)] $I$ is normally torsion-free. 
 \item[(b)] One of the following conditions hold:
\begin{itemize}
	\item[(i)] $x_nu\geq_{lex}x_1v$;
	\item[(ii)] $x_2^{d-1}x_M<_{lex}v\leq x_2^d$ and $w>_{lex}x_2u/x_1$, where $w$ is the greatest monomial of degree $d$ such that $w<_{lex}v$ and $M=\min(u/x_1)$;
	\item[(iii)] $d=2$, $u=x_1x_3$ and $v=x_2^d$. 
\end{itemize}
\item[(c)] $\depth(S/I)=\depth(S/I^k)$ for any $k\geq2$.
\end{itemize}
\end{Theorem}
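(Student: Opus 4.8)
The plan is to establish all three equivalences at once through the two direct implications (b)$\Rightarrow$(a) and (b)$\Rightarrow$(c), together with a single contrapositive showing that the failure of (b) destroys both (a) and (c) simultaneously. Throughout I keep the standing assumption $x_1\mid u$, $x_1\nmid v$, under which conditions (b)(ii)--(iii) are phrased. I will repeatedly use that $\depth(S/I^k)=0$ is equivalent to $\frak{m}\in\Ass(S/I^k)$, and that by \cite[Proposition 3.2]{EOS} the equality $\depth(S/I)=0$ holds precisely when $x_nu\geq_{lex}x_1v$, i.e. exactly when (b)(i) holds.

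For (b)$\Rightarrow$(a) I would simply dispatch the three cases to the propositions of this section: if (b)(i) holds then $\depth(S/I)=0$ and Proposition~\ref{depth0} gives that $I$ is normally torsion-free; if (b)(ii) holds then $I$ is normally torsion-free by Proposition~\ref{x2xm}, the bound $v\leq_{lex}x_2^d$ being automatic once $x_1\nmid v$; and if (b)(iii) holds then Proposition~\ref{d=2v=x_2^2} applies since $u=x_1x_3\leq_{lex}x_1x_3$. For (b)$\Rightarrow$(c) I would argue that the depth function is constant in each case: under (b)(i), Proposition~\ref{depthlex0} gives $\depth(S/I^k)=0$ for all $k$; under (b)(iii), Corollary~\ref{d=2norm} with $u=x_1x_3$ gives $\depth(S/I^k)=1$ for all $k\geq1$; and under (b)(ii) one first notes that the only degree-$d$ monomials with $x_1\nmid v$ in the lex-range $x_2^{d-1}x_M<_{lex}v\leq_{lex}x_2^d$ are exactly the $v=x_2^{d-1}x_\ell$ with $2\leq\ell\leq M-1$, whence Corollary~\ref{depthnoncomp} yields $\depth(S/I^k)=M-\ell$ for all $k\geq1$.

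The heart of the argument is the contrapositive. Assuming (b) fails, I must produce some $k\geq2$ with $\frak{m}\in\Ass(S/I^k)$ while $\frak{m}\notin\Ass(S/I)$, which at once contradicts (a) and (c). Since (b)(i) fails we have $x_nu<_{lex}x_1v$, so $\depth(S/I)>0$ and $\frak{m}\notin\Ass(S/I)$. The goal is then to invoke Theorem~\ref{depthpower0}, for which I must verify that the negation of (b) rules out both exceptional cases of that theorem. Case (i) of Theorem~\ref{depthpower0} is verbatim condition (b)(ii), hence excluded. For case (ii) of Theorem~\ref{depthpower0}, suppose $d=2$, $u\leq_{lex}x_1x_3$ and $v=x_2^2$, and write $u=x_1x_i$ with $i\geq3$: if $i=3$ this is condition (b)(iii), while if $i\geq4$ then $M=i$, the largest degree-$2$ monomial below $v=x_2^2$ is $w=x_2x_3>_{lex}x_2x_i=x_2u/x_1$, and $x_2x_M=x_2x_i<_{lex}x_2^2=v$, so condition (b)(ii) holds. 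In either subcase (b) would hold, contrary to assumption, so this case is excluded too. Thus Theorem~\ref{depthpower0} supplies some $k\geq1$ with $\depth(S/I^k)=0$, and since $\depth(S/I)>0$ necessarily $k\geq2$. Hence $\frak{m}\in\Ass(S/I^k)\setminus\Ass(S/I)$, so $I$ is not normally torsion-free and the depth function takes a positive value at $k=1$ but $0$ at $k$, so it is not constant; both (a) and (c) fail.

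The one place demanding care is precisely this reconciliation of the two different case-lists: condition (b)(iii) pins down $u=x_1x_3$, whereas exceptional case (ii) of Theorem~\ref{depthpower0} allows the whole range $u\leq_{lex}x_1x_3$. The bookkeeping above shows the apparent gap $u=x_1x_i$ with $i\geq4$ and $v=x_2^2$ is not a gap but is swallowed by (b)(ii), the strict inequality $w>_{lex}x_2u/x_1$ degenerating to an equality exactly at $u=x_1x_3$. Checking these boundary lex-comparisons, together with the observation that (b)(ii) forces $v=x_2^{d-1}x_\ell$, is the only genuinely delicate step; everything else is a direct appeal to the results already assembled in this section.
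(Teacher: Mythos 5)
Your proposal is correct and takes essentially the same route as the paper's own proof: $(b)\Rightarrow(a)$ by dispatching to Propositions~\ref{depth0}, \ref{x2xm} and \ref{d=2v=x_2^2}, the converse by contraposition through Theorem~\ref{depthpower0} together with $\depth(S/I)>0$ when (b)(i) fails, and $(b)\Leftrightarrow(c)$ via Proposition~\ref{depthlex0}, Corollary~\ref{d=2norm} and Corollary~\ref{depthnoncomp}. Your explicit reconciliation of the two case lists --- showing that the exceptional case $d=2$, $u=x_1x_i$ with $i\geq 4$, $v=x_2^2$ of Theorem~\ref{depthpower0} is absorbed into condition (b)(ii) because then $M=i$, $w=x_2x_3>_{lex}x_2x_i=x_2u/x_1$ and $x_2x_M<_{lex}x_2^2$ --- is a detail the paper leaves implicit in its phrase ``according to the above results,'' and you verify it correctly.
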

\begin{proof} ``$(b)\Rightarrow(a)$" If $I$ satisfies one of the above conditions, then $I$ is normally torsion-free by Proposition~\ref{d=2v=x_2^2}, Proposition~\ref{x2xm}, and Proposition~\ref{depth0}.

``$(a)\Rightarrow(b)$" If $I$ does not satisfy any of the three conditions, then we have $\depth(S/I)\geq1$. Thus $\frak{m}\notin\Ass(S/I)$. According to the above results, there exists some $k\geq2$ such that $\depth(S/I^k)=0$, that is $\frak{m}\in\Ass(S/I^k)$. Therefore, there exists some $k\geq 2$ such that $\Ass(S/I)\neq\Ass(S/I^k)$ and $I$ is not normally torsion-free.

``$(b)\Leftrightarrow(c)$" The statement follows by using Corollary \ref{depthnoncomp}, Corollary \ref{d=2norm}, Proposition \ref{depthlex0}, and Theorem \ref{depthpower0}.
\end{proof}

\end{document}